\documentclass[11pt]{amsart}

\usepackage[T1]{fontenc}
\usepackage{lmodern}
\usepackage[a4paper,left=1in,right=1in,top=0.8in,bottom=0.8in]{geometry}
\usepackage{amsmath,amssymb,amsthm,mathtools}
\usepackage{microtype}
\usepackage[hidelinks]{hyperref}

\hypersetup{
  pdftitle={Ricci pinching on complete Kahler manifolds},
  pdfauthor={Gang Liu and Lei Ni},
  pdfsubject={Ricci pinching, Busemann exhaustion, signed Hodge heat mass, and compactness},
  pdfkeywords={Kahler manifold, Ricci pinching, nonnegative sectional curvature, positive bisectional curvature, 
  Busemann exhaustion, quadratic orthogonal bisectional curvature}
}

\newtheorem{theorem}{Theorem}[section]
\newtheorem{proposition}[theorem]{Proposition}
\newtheorem{lemma}[theorem]{Lemma}

\theoremstyle{remark}
\newtheorem{remark}[theorem]{Remark}
\numberwithin{equation}{section}
\allowdisplaybreaks[2]

\newcommand{\Ric}{\operatorname{Ric}}
\newcommand{\Rm}{\operatorname{Rm}}
\newcommand{\Vol}{\operatorname{Vol}}
\newcommand{\tr}{\operatorname{tr}}
\newcommand{\supp}{\operatorname{supp}}
\newcommand{\ddc}{dd^c}
\newcommand{\dV}{\,dV}
\newcommand{\eps}{\varepsilon}
\newcommand{\C}{\mathbb C}
\newcommand{\R}{\mathbb R}
\newcommand{\norm}[1]{\left\lVert #1\right\rVert}
\newcommand{\Ricmin}{\Ric_{\min}}
\newcommand{\Ricmax}{\Ric_{\max}}

\title[Ricci pinching on complete K\"ahler manifolds]
{Ricci pinching on complete K\"ahler manifolds}
\author{Gang Liu}
\address{Gang Liu. School of Mathematics, East China Normal University,
Shanghai, China, 20062}
\email{gliu@math.ecnu.edu.cn}
\author{Lei Ni}
\address{Lei Ni. Zhejiang Normal University, Jinhua, Zhejiang, China, 321004}
\address{Department of Mathematics, University of California, San Diego,
La Jolla, CA 92093, USA}
\email{lni.math.ucsd@gmail.com}
\date{}

\begin{document}
\begin{abstract}
We study Ricci pinching problem on complete K\"ahler manifolds. For complete
noncompact manifolds with Euclidean volume growth, we prove flatness under
nonnegative bisectional curvature and Ricci pinching, and Ricci flatness
under nonnegative Ricci curvature, quadratic curvature decay, and the same
pinching condition. In both cases, the pinching constant may tend to zero
at a logarithmic rate. Examples show that the logarithmic exponent is
sharp in complex dimension two. Second part of the paper proves that a complete K\"ahler
manifold of complex dimension at least two with positive holomorphic
bisectional curvature is compact if its  sectional curvature is
nonnegative and its Ricci curvature is uniformly pinched outside a compact
set. No volume-growth assumption or curvature upper bound is needed.
Heat smoothing of the Busemann function supplies a strictly
plurisubharmonic exhaustion with bounded gradient, so the general top
Ricci-mass theorem of Datar--Pingali--Seshadri applies. The new ingridents are integral estimates for the Ricci form. The analytic arguments use
scalar curvature gap theorems and a critical-$L^n$ vanishing theorem for
closed real $(1,1)$-forms under nonnegative Ricci curvature and nonnegative
quadratic orthogonal bisectional curvature. The Hodge-Laplace  heat eqution deformation
conserves the signed top-degree integral, including its initial
value, while norm domination and scalar heat-semigroup decay force this
integral to vanish. For semipositive initial forms this gives pointwise
vanishing of the top wedge power. 
\end{abstract}
\maketitle

\section{Introduction and statements}

The Bonnet--Myers theorem asserts that a complete Riemannian manifold with
a positive uniform lower bound for the Ricci curvature is compact. It is
natural to ask whether one can replace the uniform lower bound by a
pointwise pinching condition. More precisely, assume that the Ricci
curvature is nonnegative and
\begin{equation}\label{eq:ratio}
  \Ricmin\ge\kappa\,\Ricmax
\end{equation}
for some constant $\kappa>0$, where $\Ricmin$ and $\Ricmax$ are the smallest
and largest eigenvalues of the Ricci endomorphism. In real dimension
three, Hamilton conjectured that a complete manifold satisfying
\eqref{eq:ratio} is flat or compact. This was proved through the work of
Lott~\cite{Lott24}, Deruelle--Schulze--Simon~\cite{DSS22}, and
Lee--Topping~\cite{LT25}. A higher dimensional analogue under
$\mathrm{PIC}_1$ pinching was recently obtained in \cite{DLSS26}. For the
compactness question under positive Ricci pinching in arbitrary dimension for any Riemannian manifold,
we refer to the article by the second author \cite{Ni25} and the references therein.

In the K\"ahler setting, Shi--Tam and the second author here~\cite[Theorem~5.4]{NST01} proved
vanishing results for nonnegative closed $(1,1)$-forms whose eigenvalues
are uniformly pinched. Applied to the Ricci form, their theorem gives
flatness under nonnegative bisectional curvature, pointwise quadratic
scalar curvature decay, and the corresponding averaged decay bound.
Deng--Zhu~\cite{DZ15} established related rigidity results for expanding
and steady gradient K\"ahler--Ricci solitons; see also the discussion in
\cite{Ni25}.

The first part of this paper concerns complete K\"ahler manifolds with
Euclidean volume growth. These results are direct applications of the
integral estimates in \cite{Liu22,Liu24} and scalar curvature gap theorems.
Under nonnegative bisectional curvature, no pointwise curvature decay is
assumed. The observation is that Ricci pinching converts the finiteness
of the top Ricci integral into an integral bound for the scalar curvature.
This also allows the pinching constant to degenerate at infinity.

The second part gives a compactness theorem under positive holomorphic
bisectional curvature, with nonnegative real sectional curvature and
uniform Ricci pinching required only outside a compact set. In particular,
it includes the case of positive sectional curvature, without
assuming Euclidean volume growth. The geometric argument uses the
Busemann heat deformation of Tam and the second author ~\cite[Lemma~4.1 and the proof of
Theorem~4.2]{NT03} to construct a smooth strictly plurisubharmonic
exhaustion with bounded gradient. The finite top
Ricci-mass theorem of Datar--Pingali--Seshadri
\cite[Theorem~1.1]{DPS26} \footnote{If one applies 
the most recent arXiv preprint of these authors aied with AI, Theorem \ref{thm:main} 
holds under the positivity of the bisectional curvature only.  If Yau's conjecture held
in the full generality, Theorem \ref{thm:main} holds under nonnegativity of Ricci and quadratic orthogonal bisectional curvature.} then applies. The analytic argument uses the
closed Hodge heat evolution constructed by Tam and the second author~\cite{NT13}. We prove
that, under nonnegative Ricci curvature and nonnegative quadratic
orthogonal bisectional curvature, every closed real $(1,1)$-form whose
norm belongs to $L^n$ has zero signed top-degree integral. 
  A semipositive
such form therefore has identically vanishing top wedge power. This
provides the finite-mass rigidity theorem in Section~\ref{sec:forms}. This result shares
 the same philosophy advocated by the second author in his previous work(joint work), namely that 
 an `ergodicity principle' survives for parabalic equations 
 such as the linear heat equation, Ricci flow, Hermitian Yang-Mills flow, Hodge-Laplace heat equation, 
 Gauss curvature flow etc. The main difference is that previously the monotonicity provided the path 
 to use the large time behavior to control the initial data. 
 Now the large time asymptotic behavior is used to control the initial time data 
 via the conservation of the total mass. 
The weaker analytic curvature assumptions are kept separate from the
hypotheses of the geometric finiteness theorem.

Throughout the paper, $n$ denotes the complex dimension. All manifolds are
understood to be connected. Fix $p\in M$ and write $r(x)=d(p,x)$. We say
that $M$ has \emph{Euclidean volume growth} if
\[
  v=\lim_{R\to\infty}
       \frac{\Vol(B(p,R))}{\omega_{2n}R^{2n}}>0,
\]
where $\omega_{2n}$ is the volume of the Euclidean unit ball in real
dimension $2n$. \emph{Quadratic curvature decay} means that
\[
  |\Rm|(x)\le\frac{C}{(1+r(x))^2}
\]
for some constant $C>0$.

\subsection*{Results with Euclidean volume growth}
We first state the results with a uniform pinching constant.

\begin{theorem}\label{thm:euclidean-bk}
Let $M^n$, $n\ge2$, be a complete noncompact K\"ahler manifold with
nonnegative bisectional curvature and Euclidean volume growth. Assume that
the Ricci curvature is pinched in the sense of \eqref{eq:ratio}.
Then $M$ is flat.
\end{theorem}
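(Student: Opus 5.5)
The plan is to turn the pinching condition \eqref{eq:ratio} into an integral bound on the scalar curvature $S$, and then apply a scalar curvature gap theorem for Euclidean volume growth.

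\textbf{Step 1: finite top Ricci integral.} For complete K\"ahler manifolds with nonnegative bisectional curvature and Euclidean volume growth, the integral estimates of \cite{Liu22,Liu24} give a uniform bound
\[
  \int_{B(p,R)}\Ric^n\le C
\]
independent of $R$. Here $\Ric^n$ is the top wedge power of the Ricci form, i.e.\ a multiple of $\det(\Ric)$ times the volume form. I take this as the main input; it can be read as a version of Yau's finite Chern number statement. So the total mass $\int_M \det(\Ric)\,dV$ is finite.

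\textbf{Step 2: pinching gives $S^n\in L^1$.} Let the eigenvalues of the Ricci endomorphism be $0\le\lambda_1\le\dots\le\lambda_n$, so $S\le 2n\lambda_n$ up to normalization. Pinching gives $\lambda_i\ge\kappa\lambda_n$ for every $i$. Hence
\[
  \det\Ric\ge\kappa^{n-1}\lambda_n^{n}\ge c(n,\kappa)\,S^n .
\]
Therefore $\int_M S^n\,dV<\infty$. This is exactly the critical $L^{n}$ (real dimension $2n$, exponent $n=\tfrac{2n}{2}$) integrability of scalar curvature.

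\textbf{Step 3: gap theorem.} Under nonnegative bisectional curvature and Euclidean volume growth, finite $\int_M S^n$ should force $S\equiv0$. I would prove this in one of two ways.

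\begin{itemize}
\item Use the average decay: with $k(x,r)=\frac{1}{\Vol B(x,r)}\int_{B(x,r)}S$, volume growth $\ge v r^{2n}$ and H\"older give
\[
  k(p,r)\le C\Big(\int_{M\setminus B(p,r/2)}S^n\Big)^{1/n}r^{-2}+o(r^{-2}).
\]
So $r^2k(p,r)\to0$. Then invoke the gap theorem of Ni--Shi--Tam / Ni (\cite{NST01}) for nonnegative bisectional curvature: $\int_0^r s\,k(x,s)\,ds=o(\log r)$ implies flatness. That requires $k(x,s)=o(s^{-2})$ in a form uniform enough in the integral.
\item Alternatively, use the Poisson equation solution $u$ with $\Delta u=S$ and its plurisubharmonicity.
\end{itemize}

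\textbf{Main obstacle.} The hard step is Step 3. The $L^n$ bound gives $r^2k(p,r)\to0$ but not obviously summable against $ds/s$, so the $o(\log r)$ condition needs care. I would first try to sharpen Step 1 to a decay rate, of the form $\int_{M\setminus B(p,r)}\Ric^n\to0$ with a quantitative rate from \cite{Liu24}. If that fails, I would use the monotonicity of $r^{2}k$-type quantities along the heat flow of $S$ (Ni--Tam) to upgrade to the needed decay. Once $S\equiv0$, nonnegative bisectional curvature forces $\Rm\equiv0$, so $M$ is flat.
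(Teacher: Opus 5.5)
Your Steps 1 and 2 follow the paper. Proposition~\ref{prop:top-mass-euclidean} gives $\int_M\rho^n<\infty$, and \eqref{eq:ratio-determinant} turns this into $\int_M S^n\dV<\infty$, as noted in Remark~\ref{rem:uniform-integrability}. Your H\"older computation in Step 3 also already produces what the paper needs:
\[
  \lim_{r\to\infty}r^2k(p,r)=\lim_{r\to\infty}\frac{r^2}{\Vol(B(p,r))}\int_{B(p,r)}S\dV=0 .
\]
At exactly this point the paper invokes Ni's optimal gap theorem \cite{Ni12}. Its hypothesis is precisely this $o(r^{-2})$ average decay of $S$ together with nonnegative bisectional curvature, and it gives flatness directly.

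So the proposal is incomplete only because you stop here. The ``main obstacle'' you describe does not exist, and your fallback plans (a quantitative tail rate for $\int\rho^n$, heat-flow monotonicity) are unnecessary. Even with the $o(\log r)$ criterion you quote, there is no summability issue. Given $\eps>0$, pick $s_0$ with $s^2k(p,s)<\eps$ for $s\ge s_0$. Then
\[
  \int_0^r sk(p,s)\,ds\le C(s_0)+\eps\log(r/s_0),
\]
which is $o(\log r)$ because $\eps$ is arbitrary.

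There are two smaller points.

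First, in your displayed estimate, the contribution of $B(p,r/2)$ is not $o(r^{-2})$ after a single H\"older step. That step only gives $O(r^{-2})$, with constant $\norm{S}_{L^n}$. Split instead at a fixed radius $r_0$. Using $\Vol(B(p,r))\ge v\,\omega_{2n}r^{2n}$ and $n\ge2$, this gives
\[
  \limsup_{r\to\infty}r^2k(p,r)\le C(n,v)\Bigl(\int_{M\setminus B(p,r_0)}S^n\dV\Bigr)^{1/n},
\]
and then you let $r_0\to\infty$.

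Second, the paper actually proves the more general Lemma~\ref{lem:euclidean-gap}, under $\int_{B(p,R)}S^n\dV=o(\log R)$, because it needs this for Theorem~\ref{thm:logarithmic}. There H\"older only controls $S$ on a sparse sequence of dyadic annuli. The existence of the limit \eqref{eq:scalar-limit}, taken from \cite{Liu24}, is then needed to pass to all radii. With uniform pinching you have $S\in L^n$, and your direct argument legitimately avoids that input.
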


\begin{theorem}\label{thm:euclidean-ric}
Let $M^n$, $n\ge2$, be a complete noncompact K\"ahler manifold with
nonnegative Ricci curvature, Euclidean volume growth, and quadratic
curvature decay. Assume that the Ricci curvature is pinched in the sense
of \eqref{eq:ratio}. Then $M$ is Ricci flat.
\end{theorem}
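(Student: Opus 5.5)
The plan is to follow the scheme indicated in the introduction: use the integral estimates of \cite{Liu22,Liu24} to get finiteness of the top Ricci integral $\int_M \Ric^n$. The pinching condition \eqref{eq:ratio} then turns this into an integral bound on the scalar curvature, and a scalar curvature gap theorem under Euclidean volume growth and quadratic decay forces $S\equiv0$, hence Ricci flatness.

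\textbf{Step 1: finite top Ricci mass.} Let $\rho$ be the Ricci form. Under nonnegative Ricci curvature, Euclidean volume growth and quadratic curvature decay, the tangent cones at infinity are metric cones. I would invoke the estimates of \cite{Liu22,Liu24}, which rest on the three-circle/Cheeger--Colding framework for such manifolds. These should give
\[
\int_{B(p,R)} \rho^k\wedge\omega^{n-k}\le C R^{2n-2k}\quad (0\le k\le n),
\]
and in particular $\int_M \rho^n<\infty$. This is the step I expect to be the main obstacle: one must confirm that the cited estimates hold with only $\Ric\ge0$ plus quadratic decay, not nonnegative bisectional curvature. I would try first to reduce this to the quadratic decay bound $|\Rm|\le C(1+r)^{-2}$, which gives $\rho\le C\omega/(1+r)^2$ pointwise and allows cutoff/integration-by-parts on $\rho=-\ddc\log\det g$.

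\textbf{Step 2: pinching converts the top mass into a scalar curvature bound.} Let $\lambda_1\le\dots\le\lambda_n$ be the eigenvalues of $\rho$ relative to $\omega$. Then $\rho^n=n!\,\lambda_1\cdots\lambda_n\,\omega^n/n!$ up to normalization. Pinching gives $\lambda_i\ge\kappa\lambda_n\ge\kappa S/n$, where $S=\sum\lambda_i$. Hence $\rho^n/\omega^n\ge c(\kappa,n)S^n$, and so
\[
\int_M S^n\dV<\infty.
\]
This is the critical $L^n$ integrability of the scalar curvature.

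\textbf{Step 3: gap theorem.} With Euclidean volume growth, quadratic decay and $\Ric\ge0$, I would apply a scalar curvature gap theorem. An $L^n$ bound on $S$ (equivalently $\int S^n<\infty$) should imply $S\equiv0$. If only a version requiring faster average decay is available, I would first upgrade. On annuli $A_R=B(p,2R)\setminus B(p,R)$, finiteness of $\int S^n$ plus the volume bound $\Vol(A_R)\sim R^{2n}$ gives $\frac{1}{\Vol A_R}\int_{A_R}S\le (\int_{A_R}S^n)^{1/n}R^{-2}\,C=o(R^{-2})$. That is exactly the averaged sub-quadratic decay used in gap theorems (e.g.\ via the Poisson equation $\Delta u=S$ with the bounded solution from Euclidean volume growth, combined with $\Delta S\ge$-type inequalities or the Kähler identity $\rho=\ddc(-\log\det g)$ and plurisubharmonicity-free arguments). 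This forces $S\equiv0$. Since $\Ric\ge0$, we get $\Ric\equiv0$, and $M$ is Ricci flat.
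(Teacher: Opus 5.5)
Your plan is essentially the paper's proof: finiteness of $\int_M\rho^n$ is quoted from \cite[Lemma~5]{Liu24}, pinching gives $\int_M S^n\dV<\infty$, and the gap theorem you anticipate in Step~3 is \cite[Corollary~7]{Liu24}, whose hypothesis $\int_{B(p,R)}S^n\dV=o(\log R)$ then follows at once (in Step~1 the citation is genuinely needed, since the pointwise bound $\rho\le C\omega/(1+r)^2$ alone only gives $\int_{B(p,R)}\rho^n=O(\log R)$). Make sure Step~3 rests on that result of Liu, because the mechanisms in your parenthetical (Ni's averaged $o(R^{-2})$ gap theorem, or a Poisson equation combined with $\Delta S$-type inequalities) rely on nonnegative bisectional curvature and are not available under $\Ric\ge0$ alone.
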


Ricci flatness is the best conclusion in Theorem~\ref{thm:euclidean-ric}.
For instance, the Eguchi--Hanson metric is nonflat and satisfies all the
assumptions. Ricci flatness and compactness are the alternatives in the
general question asked in \cite{Ni25}. The distinction between
Theorems~\ref{thm:euclidean-bk}
and~\ref{thm:euclidean-ric} comes from the corresponding gap theorems of
the second author~\cite{Ni12} and the first author~\cite{Liu24}.

In fact, we have the following version with a pinching constant depending
on the distance (cf.\ \cite{Ni25} for a similar result for convex
hypersurfaces).

\begin{theorem}\label{thm:logarithmic}
In Theorems~\ref{thm:euclidean-bk} and~\ref{thm:euclidean-ric}, one may
replace \eqref{eq:ratio} by
\begin{equation}\label{eq:log-pinching}
  \Ricmin(x)\ge
  \frac{c}{\bigl(\log(2+r(x))\bigr)^{1/(n-1)}}\Ricmax(x)
\end{equation}
outside a compact set, where $c>0$ is a constant. The conclusions remain
the same.
\end{theorem}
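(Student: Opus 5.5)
We follow the strategy announced in the introduction. The integral estimates of \cite{Liu22,Liu24} bound the top Ricci integral. Pinching turns that bound into an integral bound on the scalar curvature, and the gap theorems of \cite{Ni12} (bisectional case) and \cite{Liu24} (Ricci case with quadratic decay) then force the scalar curvature to vanish.

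\textbf{Step 1: the top Ricci integral.} Let $\rho$ be the Ricci form. Under nonnegative bisectional curvature and Euclidean volume growth, the estimates of \cite{Liu22} give
\[
  \int_M \rho^n<\infty .
\]
Under nonnegative Ricci curvature, quadratic curvature decay and Euclidean volume growth, \cite{Liu24} gives the same bound. In both cases $\rho\ge0$.

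\textbf{Step 2: from $\rho^n$ to a power of $S$.} Let $0\le\lambda_1\le\dots\le\lambda_n$ be the eigenvalues of $\rho$ with respect to $\omega$. Then $\rho^n/\omega^n=\prod_i\lambda_i$, up to a dimensional constant. Write $\kappa(x)$ for the local pinching ratio, so $\lambda_1\ge\kappa\lambda_n$ and hence $\lambda_i\ge\kappa\lambda_n$ for every $i$. This gives
\[
  \prod_i\lambda_i\;\ge\;\kappa^{\,n-1}\lambda_n^{\,n}\;\ge\;c_n\,\kappa^{\,n-1}S^n,
\]
using $S\le n\lambda_n$. Only $n-1$ factors of $\kappa$ appear, because the top eigenvalue is paired with itself. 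In the setting of \eqref{eq:log-pinching} we have $\kappa^{n-1}=c^{n-1}/\log(2+r)$. Therefore
\[
  \int_M \frac{S^n}{\log(2+r)}\,dV<\infty .
\]
The compact set where \eqref{eq:log-pinching} may fail contributes only a finite amount.

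\textbf{Step 3: the gap theorem.} We must conclude $S\equiv0$. This gives flatness in the bisectional case, since nonnegative bisectional curvature with $S=0$ forces $\Rm=0$, and Ricci flatness in the other case.

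For Theorem~\ref{thm:euclidean-bk} I would use the mean-value form of the gap theorem in \cite{Ni12}. Let $k(x,r)$ denote the average of $S$ over $B(x,r)$. If $k(p,r)=o(r^{-2})$, then $M$ is flat. By H\"older's inequality and Euclidean volume growth,
\[
  k(p,r)\le \Bigl(\frac{1}{\Vol B(p,r)}\int_{B(p,r)}S^n\Bigr)^{1/n}
  \le C\,r^{-2}\Bigl(\log(2+r)\int_{B(p,r)}\frac{S^n}{\log(2+r)}\Bigr)^{1/n}.
\]
This only yields $k(p,r)\le C r^{-2}(\log r)^{1/n}$, which is not $o(r^{-2})$. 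So one needs finer information. The natural fix is a dyadic decomposition: put $a_j=\int_{B(p,2^{j+1})\setminus B(p,2^{j})}S^n/\log(2+r)$, so $\sum_j a_j<\infty$. Then use the monotonicity or sub-mean-value properties of $S$ under nonnegative bisectional curvature (via the plurisubharmonic potential of \cite{Ni12}) to show that the averaged quantity $\int_0^\infty s\,k(p,s)\,ds$ is finite, or at least grows slower than $\log r$. Finiteness of that integral is the gap criterion.

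The expected main obstacle is exactly this: converting the logarithmically weighted $L^n$ bound into the $o(r^{-2})$ mean-value, or $\int s\,k\,ds$, condition. The exponent $1/(n-1)$ is tuned so that the weight is exactly one logarithm, presumably the borderline the gap theorem tolerates. For Theorem~\ref{thm:euclidean-ric}, the pointwise quadratic decay should make this step easier, since the gap theorem in \cite{Liu24} requires only an integral smallness that the dyadic argument supplies.
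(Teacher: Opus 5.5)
There is a genuine gap in Step 3, at exactly the point you flag as the ``main obstacle''. Steps 1 and 2 match the paper (Proposition~\ref{prop:top-mass-euclidean} and Lemma~\ref{lem:log-integral}). One detail there: the form you actually need is $\int_{B(p,R)}S^n\dV=o(\log R)$. From your weighted bound this requires splitting at a large radius $R_0$, since multiplying the total weighted integral by $\log(2+R)$ only gives $O(\log R)$.

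In Step 3, the appeal to ``monotonicity or sub-mean-value properties of $S$ via the plurisubharmonic potential'' is not an argument. Finiteness of $\int_0^\infty s\,k(p,s)\,ds$ also cannot be extracted from an $o(\log R)$ bound on the $S^n$-mass by H\"older-type estimates. Here is what H\"older on dyadic annuli $A_j$ actually yields. Set $m_j=\int_{A_j}S^n\dV$, so that $\sum_{j<N}m_j=o(N)$. Then $2^{(2-2n)j}\int_{A_j}S\dV\le C m_j^{1/n}$, and
$\sum_{j<N}m_j^{1/n}\le N^{1-1/n}\bigl(\sum_{j<N}m_j\bigr)^{1/n}=o(N)$.
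This gives smallness along a subsequence of annuli. Summing over the annuli inside a ball with geometric weights gives at best $\liminf_{r\to\infty}r^2k(p,r)=0$. That is weaker than the hypothesis $k(p,r)=o(r^{-2})$ you quote from \cite{Ni12}.

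The paper closes this with an extra input from \cite{Liu24} (Theorems~2 and~3). Under nonnegative bisectional curvature and Euclidean volume growth, the limit $\ell=\lim_{R\to\infty}R^{2-2n}\int_{B(p,R)}S\dV$ exists and is finite. With that, subsequential smallness is enough:
\begin{itemize}
\item Pick $j_k$ with $m_{j_k}\to0$ and put $R_k=2^{j_k}$.
\item H\"older and Bishop--Gromov give $R_k^{2-2n}\int_{A_{j_k}}S\dV\le C m_{j_k}^{1/n}\to0$.
\item The same quantity tends to $(2^{2n-2}-1)\ell$, so $\ell=0$ because $n\ge2$.
\item By Euclidean volume growth, $\ell=0$ is exactly $k(p,R)=o(R^{-2})$, and Ni's gap theorem gives flatness.
\end{itemize}
Alternatively, your $\liminf$ would suffice if you cited the stronger ``positive mass'' form of \cite{Ni12}: $\liminf_{r\to\infty}r^2k(p,r)>0$ unless $M$ is flat. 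Either way an additional result must be invoked, and no sub-mean-value argument is involved.

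In the Ricci case no dyadic argument is needed. \cite[Corollary~7]{Liu24} takes precisely $\int_{B(p,R)}S^n\dV=o(\log R)$ as its hypothesis, so the conclusion follows once the splitting above is done.
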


No pinching condition is needed on the compact set. In particular, the
uniform pinching condition in Theorems~\ref{thm:euclidean-bk}
and~\ref{thm:euclidean-ric} is also only required outside a compact set.

\subsection*{Compactness under positive bisectional curvature}

\begin{theorem}[Compactness]\label{thm:main}
Let $(M^n,g,J)$ be a connected complete K\"ahler manifold of complex
dimension $n\ge2$ with strictly positive holomorphic bisectional
curvature. Suppose that there are a compact set $C\subset M$ and a
constant $\kappa>0$ such that, on $M\setminus C$, the real sectional
curvature is nonnegative and
\[
  \Ricmin\ge\kappa\,\Ricmax.
\]
Then $M$ is compact.
\end{theorem}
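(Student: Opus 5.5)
We argue by contradiction and assume that $M$ is noncompact. The plan has three parts. First, build a smooth strictly plurisubharmonic exhaustion with bounded gradient. Second, use Ricci pinching to show that the Ricci form has finite top mass. Third, show that this finite top mass forces $\rho^n\equiv0$, which is impossible.

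\emph{Step 1: the exhaustion.} Since $M$ is complete and noncompact, there is a ray from $p$, and hence a Busemann function $b$. Outside the compact set $C$ the sectional curvature is nonnegative, so $b$ is convex there in the barrier sense. Since the bisectional curvature is positive on all of $M$, $b$ is plurisubharmonic. We deform $b$ by the heat equation, following \cite[Lemma~4.1 and the proof of Theorem~4.2]{NT03}: set $u=e^{t\Delta}b$ for a fixed $t>0$. The Li--Yau--Hamilton type estimate, together with strict positivity of the bisectional curvature, should then give the following.
\begin{itemize}
\item $\sqrt{-1}\partial\bar\partial u>0$ everywhere.
\item $|\nabla u|\le 1$, because $b$ is $1$-Lipschitz.
\item $u$ is an exhaustion, because $b$ grows linearly along rays and the convexity of $b$ outside $C$ keeps sublevel sets compact.
\end{itemize}
The compact set $C$, where only positive bisectional curvature is available, costs a bounded error. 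That error is absorbed by adding a large multiple of the distance cutoff, or simply by working outside a larger compact set.

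\emph{Step 2: finite top Ricci mass.} We now invoke \cite[Theorem~1.1]{DPS26}. On a complete Kähler manifold with a strictly plurisubharmonic exhaustion of bounded gradient and positive bisectional curvature, it gives
\[
\int_M \rho^n<\infty
\]
for the Ricci form $\rho$. Outside $C$ the pinching $\Ricmin\ge\kappa\Ricmax$ yields the pointwise bound
\[
\rho^n\ge c(n,\kappa)\,S^n\,\omega^n,
\]
and hence $\int_{M\setminus C}S^n<\infty$. Since $|\rho|\le S$ when $\Ric\ge0$, we get $|\rho|\in L^n$.

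\emph{Step 3: vanishing and contradiction.} Positive bisectional curvature implies both nonnegative Ricci curvature and nonnegative quadratic orthogonal bisectional curvature. So the critical-$L^n$ vanishing theorem of Section~\ref{sec:forms} applies to the closed semipositive real $(1,1)$-form $\rho$. That theorem is proved by Hodge heat flow, conservation of the signed top integral, and scalar heat-semigroup decay. It gives $\rho^n\equiv0$. But positive bisectional curvature forces $\Ric>0$ everywhere, so $\rho^n>0$, a contradiction. Hence $M$ is compact.

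\emph{Main obstacle.} I expect Step 1 to be the hardest, specifically obtaining strict plurisubharmonicity and the exhaustion property globally when nonnegative sectional curvature is only assumed outside $C$. I would first try running the heat-flow argument of \cite{NT03} for $b$ on the end, using the positive bisectional curvature inside $C$. If that fails, I would modify $u$ by a compactly supported plurisubharmonic correction, using $\sqrt{-1}\partial\bar\partial\log$ of the volume form near $C$. The input of \cite{DPS26} in Step 2 may also require a specific normalization of the gradient bound, which has to be checked.
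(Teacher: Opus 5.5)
Your overall route matches the paper's: Busemann heat smoothing, then \cite[Theorem~1.1]{DPS26}, then pinching to get $S\in L^n$, then Theorem~\ref{thm:critical}. Steps~2 and~3 are sound. Step~3 is even a slight shortcut, since $\rho^n\equiv0$ contradicts $\rho>0$ directly. You therefore need neither to extend the pinching across $C$ nor to go through the flatness conclusion of Theorem~\ref{thm:finite}; the $L^n$ bound on $C$ is trivial by compactness.

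The gap is in Step~1. You use the Busemann function of a \emph{single} ray, and this is in general not an exhaustion, so the hypothesis of \cite{DPS26} is not verified. Your justification (linear growth along the ray plus convexity) says nothing about compactness of sublevel sets. On flat $\C^n$ the Busemann function of one ray is a linear function: it is convex and grows linearly along its ray, yet its sublevel sets are noncompact.

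What is needed is the full Busemann function $B=\sup_\gamma B_\gamma$, the supremum over \emph{all} unit-speed rays from the base point, as in \eqref{eq:full-busemann} and Lemma~\ref{lem:busemann}. It remains $1$-Lipschitz and plurisubharmonic. Its properness comes from the Cheeger--Gromoll type comparison argument invoked in \cite[proof of Theorem~4.2]{NT03}, and this is the only place where nonnegative sectional curvature outside $C$ enters. Once $B$ is proper, $P_{t_0}B$ is a smooth strictly plurisubharmonic exhaustion with $|\nabla P_{t_0}B|\le1$ by \cite[Lemma~4.1]{NT03}. Strictness comes from the parallel-kernel argument together with strictly positive bisectional curvature.

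In particular there is no error on $C$ to absorb. Plurisubharmonicity holds on all of $M$ because the bisectional curvature is positive everywhere. Your fallback repairs would not work anyway. Adding a multiple of a distance function can destroy plurisubharmonicity. A compactly supported plurisubharmonic function on a connected noncompact manifold is identically zero by the maximum principle.
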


In particular, the theorem applies when $K_g\ge0$ everywhere and the
Ricci curvature is uniformly pinched. It also covers the
positive-sectional-curvature case, since strictly positive 
sectional curvature on a K\"ahler manifold implies strictly positive
holomorphic bisectional curvature. Positivity of bisectional curvature
is assumed everywhere in Theorem~\ref{thm:main}; only the sectional
curvature and pinching conditions may be restricted to the complement
of a compact set.

The eigenvalues in \eqref{eq:ratio} are those of the Ricci endomorphism
with respect to $g$. On a K\"ahler manifold they occur in equal real pairs,
so the same ratio is obtained from the Hermitian Ricci tensor. In
particular, \eqref{eq:ratio} does not impose a uniform positive lower bound
on the individual eigenvalues. Unlike
Theorems~\ref{thm:euclidean-bk}--\ref{thm:logarithmic},
Theorem~\ref{thm:main} has no volume-growth assumption. Its pinching
hypothesis is uniform; the logarithmic weakening in
Theorem~\ref{thm:logarithmic} is not asserted for
Theorem~\ref{thm:main}.

\subsection*{Examples and sharpness}
The exponent in Theorem~\ref{thm:logarithmic} is sharp when $n=2$.
The examples below are obtained by choosing a radial profile in the
family of Wu--Zheng~\cite{WZ11} and computing the decay of the Ricci
eigenvalue ratio.

\begin{proposition}\label{prop:examples}
For every $n\ge2$ and $\delta>0$, there exists a complete
$\mathrm U(n)$-invariant K\"ahler metric on $\C^n$ with positive
bisectional curvature, Euclidean volume growth, and quadratic curvature
decay such that
\begin{equation}\label{eq:example-ratio}
  \frac{\Ricmin(x)}{\Ricmax(x)}
  \asymp\frac{1}{(\log r(x))^{1+\delta}}
\end{equation}
as $r(x)\to\infty$. Here $r$ is the distance from the origin, and
$\asymp$ means that the ratio of the two sides is bounded above and below
by positive constants.
\end{proposition}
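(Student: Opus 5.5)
We work with $\mathrm U(n)$-invariant K\"ahler metrics on $\C^n$ written as $\omega=\sqrt{-1}\,\partial\bar\partial P(|z|^2)$, in the Wu--Zheng parametrization. Set $t=|z|^2$ and $h=P'(t)$, and let $\xi(t)=-t h'(t)/h(t)$ (up to normalization). Wu--Zheng show that such a metric is complete with positive bisectional curvature exactly when $\xi$ is smooth with $\xi(0)=0$, $\xi'>0$, and $\xi<1$, and that Euclidean volume growth holds iff $\lim_{t\to\infty}\xi(t)=\xi_\infty<1$. So the plan is to prescribe a profile $\xi$ and read off all properties from it.

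The key computation is the Ricci tensor in the two invariant directions. The Ricci form is $-\sqrt{-1}\,\partial\bar\partial\log\det(g)$ with $\det g=h^{n-1}(th)'$, so again it is $\sqrt{-1}\,\partial\bar\partial$ of a radial function $Q(t)$. Its eigenvalues relative to $g$ are:
\begin{itemize}
\item tangential, on the $n-1$ directions orthogonal to $z$: $\lambda_1=Q'/h$;
\item radial: $\lambda_2=(tQ')'/(th)'$.
\end{itemize}
Writing $\log\det g=-(n-1)\int\xi\,dt/t+\log(1-\xi)+\log h$-type terms gives explicit formulas. When $\xi\to\xi_\infty$ slowly, one expects
\[
tQ'\approx n\,\bigl(\xi_\infty-\xi(t)\bigr)+\text{lower order},
\]
since the leading part is $(n-1)\xi+\xi$ minus its limit. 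Hence $\lambda_1\sim(\xi_\infty-\xi)/(th)$. The radial eigenvalue is governed by $(tQ')'\approx -n\,\xi'(t)$, so $\lambda_2\sim t\xi'/(th)$. The ratio is therefore
\[
\frac{\lambda_2}{\lambda_1}\asymp\frac{t\,\xi'(t)}{\xi_\infty-\xi(t)}.
\]

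Now choose, for large $t$,
\[
\xi(t)=\xi_\infty-(\log t)^{-\delta},
\]
smoothly glued to an increasing profile near $0$ with $\xi(0)=0$. Then $t\xi'=\delta(\log t)^{-1-\delta}$ and $\xi_\infty-\xi=(\log t)^{-\delta}$, so $\lambda_2/\lambda_1\asymp(\log t)^{-1}$. That is too weak, so the profile needs adjusting: the ratio must be $(\log r)^{-(1+\delta)}$.

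Since $r\sim t^{(1-\xi_\infty)/2}$ for Euclidean volume growth, $\log r\asymp\log t$. To reach exponent $1+\delta$, take instead
\[
\xi_\infty-\xi=(\log t)^{-a}\quad\text{or}\quad \xi_\infty-\xi\sim \exp\bigl(-c(\log t)^{\beta}\bigr).
\]
A cleaner route is to prescribe the ratio function directly. Set
\[
\xi_\infty-\xi(t)=\exp\Bigl(-\int^t\frac{ds}{s(\log s)^{1+\delta}}\Bigr).
\]
This makes $t\xi'/(\xi_\infty-\xi)=(\log t)^{-1-\delta}$ exactly. Because $\int^\infty ds/(s(\log s)^{1+\delta})<\infty$, the quantity $\xi_\infty-\xi$ tends to a positive constant, so we must renormalize the limit. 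The correct bookkeeping is that $\lambda_1$ is controlled by $\lim\xi-\xi(t)$ together with the lower-order terms. Convergence of this integral is exactly the condition $\delta>0$, which explains why the exponent $1$ is the borderline, and why Theorem~\ref{thm:logarithmic} is sharp at $n=2$.

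The steps in order are:
\begin{enumerate}
\item Derive the exact formulas for $\lambda_1,\lambda_2$ in terms of $\xi,\xi',\xi''$.
\item Identify the dominant terms as $t\to\infty$ under slow variation: $\xi''t^2\ll t\xi'$.
\item Choose $\xi$ with $t\xi'(t)\asymp(\log t)^{-2-\delta}$, so that $\xi_\infty-\xi\asymp(\log t)^{-1-\delta}$ and the ratio comes out as $(\log t)^{-1}$ in the opposite direction.
\item Recheck which eigenvalue is $\Ricmin$, and tune the exponents until $\lambda_{\min}/\lambda_{\max}\asymp(\log t)^{-1-\delta}$.
\item Verify quadratic curvature decay. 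This follows because $|\Rm|\lesssim 1/(th)\sim r^{-2}$ when $\xi$ has a limit below $1$.
\end{enumerate}

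The main obstacle is step 1 combined with step 4: getting the exact asymptotic expressions for both Ricci eigenvalues. Cancellation occurs in the tangential eigenvalue, which is a difference converging to zero. The first thing to try is to write $tQ'$ as an explicit function of $\xi$ and expand to first order in $\xi_\infty-\xi$ and $t\xi'$. The worry is that the leading constant might cancel, forcing one to go to second order.
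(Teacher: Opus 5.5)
There is a genuine gap, and it sits at the step you flagged as the worry: your asymptotics for the tangential Ricci eigenvalue are wrong. As a result you discard the profile that actually works.

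Work in Wu--Zheng's normalization $f=P'$, $h=(tf)'$, $\xi=-th'/h$, $H=\int_0^th=tf$. Their positivity criterion $\xi'>0$, $0<\xi<1$ refers to this $h$, not to $P'$ as in your setup. Then $\det g=f^{n-1}h$. With $Q=-\log\det g$ and the identity $th=H-\int_0^t\xi h$ (from $(th)'=(1-\xi)h$), one gets exactly
\[
  tQ'(t)=\xi(t)+\frac{n-1}{H(t)}\int_0^t\xi h\,du\longrightarrow n\xi_\infty>0.
\]
So there is no cancellation: $tQ'$ does not behave like $n(\xi_\infty-\xi)$.

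The tangential eigenvalue $Q'/f=tQ'/H\sim n\xi_\infty(1-\xi_\infty)/(th)$ is therefore of full order $r^{-2}$ and is $\Ricmax$. The radial eigenvalue is
\[
  \frac{(tQ')'}{h}=\frac{\xi'}{h}+\frac{n-1}{H^{2}}\int_0^t\xi'H\,du\sim\frac{n\xi'}{h}
\]
when $t\xi'$ varies slowly. Hence $\Ricmin/\Ricmax\asymp t\xi'(t)$, not $t\xi'/(\xi_\infty-\xi)$.

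With the correct ratio, your first choice $\xi=\xi_\infty-(\log t)^{-\delta}$ already works. Indeed $t\xi'=\delta(\log t)^{-1-\delta}$, and $\log r\sim\frac{1-\xi_\infty}{2}\log t$, which gives \eqref{eq:example-ratio}. This is essentially the paper's profile $\xi=a\bigl(1-(\log(e+t))^{-\delta}\bigr)$. The paper then verifies $A\sim B\sim a\delta/(th(\log t)^{1+\delta})$ and $D\sim 2ab/(th)$ by l'H\^opital's rule, together with completeness, the volume ratio $b^n$, and $|\Rm|=O(r^{-2})$.

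Your subsequent steps go the wrong way. Following step 3 ($t\xi'\asymp(\log t)^{-2-\delta}$) would give ratio $(\log t)^{-2-\delta}$. As you noticed, the equation $t\xi'/(\xi_\infty-\xi)=(\log t)^{-1-\delta}$ has no solution with $\xi\to\xi_\infty$. As written, steps 3--4 (``tune the exponents'') never fix a profile or verify the ratio, so the proposal does not produce the metric.

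The borderline role of $\delta>0$ is genuine, but in this family it comes from boundedness of $\xi$, i.e.\ $\int^\infty t\xi'\,\frac{dt}{t}<\infty$. It does not come from the exponential renormalization you describe.
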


Thus, in complex dimension two, replacing the exponent $1$ in
\eqref{eq:log-pinching} by any larger exponent makes the assertion false,
even under positive bisectional curvature. For $n\ge3$,
Proposition~\ref{prop:examples} does not determine whether the exponent
$1/(n-1)$ is optimal.

\subsection*{Conventions and organization}
We use $\rho$ to denote the Ricci form and set
$S=\Lambda_\omega\rho=\tr_\omega\rho$, where $\omega$ is the K\"ahler
form. All norms, integrals, connections, and heat operators are taken
with respect to the fixed metric. We use
\[
  dV=\frac{\omega^n}{n!},
  \qquad
  d^c=\frac{\sqrt{-1}}2(\bar\partial-\partial),
  \qquad
  \ddc=\sqrt{-1}\,\partial\bar\partial.
\]
The scalar Laplacian is
\[
  \Delta f=g^{i\bar j}\nabla_i\nabla_{\bar j}f,
\]
so $\Delta$ has nonpositive $L^2$ spectrum and is one half of the real
Laplace--Beltrami operator. We write $P_t=e^{t\Delta}$ for its heat
semigroup. The nonnegative Dolbeault Laplacian on forms is denoted by
$\Delta_{\bar\partial}$. Thus the Hodge heat equation used below is
$\eta_t+\Delta_{\bar\partial}\eta=0$; this differs from the full real
Hodge heat equation only by a constant rescaling of time. The
Riemannian scalar curvature is $2S$ in these conventions. Constants
denoted by $C$ may change from line to line.

Section~\ref{sec:euclidean} proves the Euclidean-growth results.
Section~\ref{sec:scalar} records two scalar heat estimates, and
Section~\ref{sec:forms} proves the critical-integrability vanishing theorem
and finite-mass rigidity. Section~\ref{sec:compactness} constructs the
bounded-gradient strictly plurisubharmonic exhaustion, obtains finite
top Ricci mass, and proves the compactness theorem. The examples are
constructed in Section~\ref{sec:examples}.

\section{Integral estimates and the Euclidean-growth proofs}
\label{sec:euclidean}

We first recall the integral estimate used in this section.

\begin{proposition}[\cite{Liu22,Liu24}]\label{prop:top-mass-euclidean}
Let $M^n$ be a complete noncompact K\"ahler manifold with nonnegative
Ricci curvature and Euclidean volume growth. Suppose that either the
bisectional curvature is nonnegative, or the curvature has quadratic
decay. Then
\begin{equation}\label{eq:top-mass-euclidean}
  \int_M\rho^n<\infty.
\end{equation}
\end{proposition}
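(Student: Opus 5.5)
This proposition is cited from \cite{Liu22,Liu24}. The plan is to recall how the finiteness of $\int_M\rho^n$ is obtained there, in two separate cases.

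\emph{Case of nonnegative bisectional curvature.} Here $\rho\ge0$, so $\rho^n\ge0$ and it suffices to bound $\int_{B(p,R)}\rho^n$ uniformly in $R$. The main tool is the three-circle/degree-growth theory of \cite{Liu22}. Under nonnegative bisectional curvature and Euclidean volume growth, $M$ is biholomorphic to an affine algebraic variety (in fact $\C^n$), and the ring of polynomial-growth holomorphic functions is finitely generated. I would then represent $\rho$ as $-\ddc\log\det(g)$ in global coordinates given by polynomial-growth functions $f_1,\dots,f_n$. Alternatively, $\rho$ can be compared with $\ddc\log(1+\sum|f_j|^2)$ plus $\ddc$ of a function of controlled growth, solved via H\"ormander $L^2$ estimates as in Mok's approach. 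Comparing Monge--Amp\`ere masses by Bedford--Taylor, $\int\rho^n$ is bounded by the mass of $(\ddc\log(1+|F|^2))^n$. That mass is finite and bounded by the degree of the polynomial map $F$, which is controlled by the volume ratio through Liu's sharp dimension estimates.

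\emph{Case of quadratic curvature decay.} By \cite{Liu24} (building on Cheeger--Colding--Tian tangent cone analysis), Euclidean volume growth together with quadratic decay gives unique tangent cones at infinity. The manifold is also asymptotic in the Gromov--Hausdorff sense to a cone $\C^n/\Gamma$-type K\"ahler cone at a controlled rate. The key step is to show that $\rho$ is $\ddc$-exact outside a compact set, with a potential of controlled growth: since $|\rho|\le C r^{-2}$, solve $\ddc u=\rho$ on the end with $|u|\le C\log r$ and $|\nabla u|\le C/r$. Then write $\int_{B_R}\rho^n$ by Stokes as a boundary term
\[
\int_{\partial B_R} d^c u\wedge\rho^{n-1},
\]
which is of size $R^{2n-1}\cdot R^{-1}\cdot R^{-2(n-1)}=O(1)$. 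The compact-set contribution is finite, and letting $R\to\infty$ along good radii gives the bound. A signed top integral is allowed here since $\rho$ need not be positive, but the estimate bounds $|\rho|^n$ integrably anyway, because $|\rho|^n\le Cr^{-2n}$ is only borderline. So the Stokes argument is what gives a finite limit, and absolute convergence needs the decay improvement from \cite{Liu24}.

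\emph{Main obstacle.} The hardest step is the potential construction in the quadratic-decay case. One has to solve $\ddc u=\rho$ on the end with sharp gradient bounds; this uses $d\rho=0$, the Poincar\'e lemma on the cone, and $\bar\partial$-estimates with weights. In the bisectional case the difficulty is instead the algebraic degree control. In the paper I would simply quote these as established in \cite{Liu22,Liu24}.
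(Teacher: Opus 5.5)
Your bottom line matches the paper. The paper does not prove this proposition; it quotes it, taking the first case as the top-degree case of the Cohn--Vossen estimate of \cite{Liu22} and the second as \cite[Lemma~5]{Liu24}, and quoting is enough. The reconstructions you attach should not be presented as the content of those references, however. They contain errors, and in the quadratic-decay sketch there is a step that would actually fail.

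\emph{Sign and integrability.} Nonnegative Ricci curvature is a standing hypothesis in \emph{both} cases, so $\rho\ge0$ and $\rho^n\ge0$ everywhere. Your remark that in the second case ``$\rho$ need not be positive'' is therefore wrong. There is no signed integral and no absolute-convergence issue, and a uniform bound on $\int_{B(p,R)}\rho^n$ is all that is required. The pointwise bound $|\rho|^n\le Cr^{-2n}$ does not bound $|\rho|^n$ integrably. Against Euclidean volume growth it gives only $\int_{B(p,R)}\rho^n=O(\log R)$, which is exactly why pointwise quadratic decay alone is insufficient.

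\emph{The quadratic-decay sketch.} The hypotheses do not make the cone at infinity flat (of type $\C^n/\Gamma$), and they give no rate of convergence. The examples of Proposition~\ref{prop:examples} satisfy every hypothesis, yet $r^2|\Rm|\not\to0$ and they approach their nonflat cone only at a logarithmic rate. More seriously, your key step asserts that $\rho$ is $\ddc$-exact outside a compact set $K$. This presupposes that $[\rho]$ vanishes in $H^2(M\setminus K;\R)\cong H^2(Y;\R)$, where $Y$ is the link at infinity, and nothing in the hypotheses forces this. For example, take suitable Calabi-ansatz metrics with $\Ric\ge0$ on negative line bundles over a Fano base, exactly conical at infinity. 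On the end, $\rho$ is pulled back from the base and can represent a nonzero class in $H^2(Y;\R)$. So your Stokes computation, although dimensionally correct, is not a proof as stated.

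\emph{The bisectional sketch.} Your Monge--Amp\`ere route is plausible. The function $u=\log|df_1\wedge\cdots\wedge df_n|^2_\omega$ is a global plurisubharmonic potential with $\ddc u=\rho$, and $u\le C\log(2+r)$ by gradient estimates. But to place $u\circ F^{-1}$ in a Lelong class on $\C^n$, you also need a polynomial lower bound $|F|\ge c\,r^{\epsilon}$ at infinity. This is an input from Liu's embedding theory that you do not state.
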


The first case is the top degree case of the Cohn--Vossen estimate in
\cite{Liu22}. The second is \cite[Lemma~5]{Liu24}. These estimates do not
require any Ricci pinching assumption. For a related result without a
volume-growth assumption, Datar--Pingali--Seshadri~\cite{DPS26} proved the
finiteness of the top Ricci integral \eqref{eq:top-mass-euclidean} under
positive sectional curvature. Their general statement assumes positive
holomorphic bisectional curvature and a bounded-gradient strictly
plurisubharmonic exhaustion. Section~\ref{sec:compactness} verifies this
exhaustion hypothesis under the curvature assumptions of
Theorem~\ref{thm:main}, independently of Ricci pinching.

On the other hand, one can also prove Theorem \ref{thm:euclidean-bk} by the scalar curvature average quadratic decay 
established previously by the first author (which was proved first by the second author with 
the additional assumption that the scalar curvature is bounded) and the Li-Yau-Hamilton estimate of 
the second author. This approach of the proof avoids the above proposition completely.

The next elementary observation explains the logarithmic exponent in
Theorem~\ref{thm:logarithmic}.

\begin{lemma}\label{lem:log-integral}
Let $(M^n,\omega)$, $n\ge2$, be a complete K\"ahler manifold with
$\rho\ge0$ and $\int_M\rho^n<\infty$. If \eqref{eq:log-pinching} holds
outside a compact set, then
\begin{equation}\label{eq:log-integral}
  \int_{B(p,R)}S^n\dV=o(\log R).
\end{equation}
\end{lemma}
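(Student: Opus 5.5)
Pointwise, we compare $\rho^n$ with $S^n\,dV$ using the pinching and then integrate over $B(p,R)$. Let $\lambda_1\le\dots\le\lambda_n$ be the eigenvalues of $\rho$ with respect to $\omega$. These are nonnegative, $\lambda_1=\Ricmin$, $\lambda_n=\Ricmax$ (the Ricci endomorphism eigenvalues come in equal pairs), and $S=\sum_i\lambda_i$. Since $\rho^n=n!\,\lambda_1\cdots\lambda_n\dV$, we have
\[
  \rho^n\ge n!\,\lambda_1^{\,n-1}\lambda_n\dV .
\]
Where \eqref{eq:log-pinching} holds we have $\lambda_1\ge \epsilon(r)\lambda_n$ with $\epsilon(r)=c(\log(2+r))^{-1/(n-1)}$. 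Together with $S\le n\lambda_n$ this gives
\[
  \rho^n\ge n!\,\epsilon(r)^{n-1}\lambda_n^n\dV\ge \frac{n!\,c^{n-1}}{n^n}\,\frac{S^n}{\log(2+r)}\dV .
\]
The exponent $1/(n-1)$ is exactly what turns $\epsilon^{n-1}$ into $1/\log$. Equivalently, the density $f:=\rho^n/\dV\ge0$ satisfies $S^n\le C f\log(2+r)$ outside a compact set $K$.

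Next we integrate. The contribution of $K$ to $\int_{B(p,R)}S^n\dV$ is a constant, which is $O(1)=o(\log R)$. On $B(p,R)\setminus K$ we have
\[
  \int_{B(p,R)\setminus K}S^n\dV\le C\int_{B(p,R)} f\,\log(2+r)\dV .
\]
Bounding $\log(2+r)\le\log(2+R)$ only gives $O(\log R)$. To improve this to $o(\log R)$ we use the finiteness $\int_M f\dV=\int_M\rho^n<\infty$. Fix $R_0$ and split at $r=R_0$. The part $\int_{B(p,R_0)}f\log(2+r)\le \log(2+R_0)\int_M\rho^n$ is a constant. The remaining part is at most $\log(2+R)\int_{M\setminus B(p,R_0)}\rho^n$. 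Dividing by $\log R$ and letting $R\to\infty$, the $\limsup$ is at most $C\int_{M\setminus B(p,R_0)}\rho^n$. This tends to $0$ as $R_0\to\infty$ by dominated convergence, which gives \eqref{eq:log-integral}.

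There is no real obstacle here. The only point needing care is the pointwise linear algebra: the eigenvalues of $\rho$ as a Hermitian form relative to $\omega$ agree with the Ricci eigenvalues. This is the normalization remarked on in the introduction, and the constant factors in it do not matter. The $o(\cdot)$ rather than $O(\cdot)$ comes purely from the tail of a convergent integral.
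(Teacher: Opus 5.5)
Your proof is correct and follows essentially the same route as the paper. The pointwise bound $\prod_j\lambda_j\ge\kappa^{n-1}\lambda_n^n\ge\kappa^{n-1}n^{-n}S^n$ gives $S^n\,dV\le C\log(2+r)\,\rho^n$ outside a compact set, and the integration step is the same: split at $R_0$, let $R\to\infty$, then let $R_0\to\infty$, using only the vanishing tail of $\int_M\rho^n$.
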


\begin{proof}
Let $0\le\lambda_1\le\cdots\le\lambda_n$ be the eigenvalues of $\rho$
with respect to $\omega$. At a point where $\lambda_1\ge\kappa\lambda_n$,
we have
\begin{equation}\label{eq:ratio-determinant}
  \prod_{j=1}^n\lambda_j
  \ge\kappa^{n-1}\lambda_n^n
  \ge\frac{\kappa^{n-1}}{n^n}S^n.
\end{equation}
Since $\rho^n=n!\bigl(\prod_j\lambda_j\bigr)dV$, condition
\eqref{eq:log-pinching} implies
\begin{equation}\label{eq:weighted-mass}
  S^n\,dV\le C\log(2+r)\,\rho^n
\end{equation}
outside a compact set. Fix $R_0$ sufficiently large that this set is
contained in $B(p,R_0)$. For $R>R_0$, we obtain
\begin{align*}
 \frac1{\log R}\int_{B(p,R)}S^n\dV
 &\le \frac1{\log R}\int_{B(p,R_0)}S^n\dV\\
 &\quad+C\frac{\log(2+R)}{\log R}
       \int_{M\setminus B(p,R_0)}\rho^n.
\end{align*}
First let $R\to\infty$, then let $R_0\to\infty$. The lemma follows from
the finiteness of the total mass.
\end{proof}

For the Ricci curvature case, \eqref{eq:log-integral} is precisely the
hypothesis of \cite[Corollary~7]{Liu24}. We record the corresponding
implication in the bisectional curvature case.

\begin{lemma}\label{lem:euclidean-gap}
Let $M^n$, $n\ge2$, be a complete noncompact K\"ahler manifold with
nonnegative bisectional curvature and Euclidean volume growth. If
\eqref{eq:log-integral} holds, then $M$ is flat.
\end{lemma}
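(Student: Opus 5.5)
The plan is to reduce Lemma~\ref{lem:euclidean-gap} to the scalar curvature gap theorem of the second author \cite{Ni12}. That theorem says: on a complete K\"ahler manifold with nonnegative bisectional curvature and Euclidean volume growth, if the scalar curvature has a sufficiently weak average decay, then $M$ is flat. Concretely, we need
\[
  \int_0^R s\,k(s)\,ds=o(\log R),
  \qquad k(s)=\frac{1}{\Vol(B(p,s))}\int_{B(p,s)}S\dV .
\]
So we must convert the $L^n$ information \eqref{eq:log-integral} into this averaged linear bound.

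\textbf{Step 1 (H\"older).} Euclidean volume growth together with Bishop--Gromov gives $\Vol(B(p,s))\asymp s^{2n}$. H\"older's inequality then yields
\[
  k(s)\le\Bigl(\frac{1}{\Vol(B(p,s))}\int_{B(p,s)}S^n\dV\Bigr)^{1/n}
  \le C\,s^{-2}\,\varepsilon(s)^{1/n}(\log s)^{1/n},
\]
where $\varepsilon(s)\to0$ encodes the $o(\cdot)$ in \eqref{eq:log-integral}.

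\textbf{Step 2 (integrate).} Integrating against $s\,ds$ gives
\[
  \int_1^R s\,k(s)\,ds\le C\int_1^R \frac{\varepsilon(s)^{1/n}(\log s)^{1/n}}{s}\,ds
  =o\bigl((\log R)^{1+1/n}\bigr).
\]
This is \emph{not} $o(\log R)$. The crude H\"older route therefore loses a power of $\log$, and this is the main obstacle.

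\textbf{Step 3 (recover the lost power).} To fix Step 2, I would use the stronger structure available under nonnegative bisectional curvature.

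The first thing to try is \cite{Ni12}'s Li--Yau--Hamilton argument. Flow $\omega$ by the K\"ahler--Ricci flow, or alternatively solve the Poincar\'e--Lelong equation $\ddc u=\rho$ with $u$ defined by the averaged Green-function integral. Ni's gap theorem is really proved by showing that the function $u$ satisfies $u(x)=o(\log r(x))$. With Euclidean volume growth the Green function satisfies $G(p,y)\asymp d(p,y)^{2-2n}$, and
\[
  u(p)\sim\int_M G(p,y)S(y)\,dV(y)
  \approx\int_1^\infty s^{1-2n}\Bigl(\int_{B(p,s)}S\dV\Bigr)ds .
\]
A H\"older estimate on dyadic annuli $A_j=B(p,2^{j+1})\setminus B(p,2^{j})$ gives
\[
  \int_{A_j}S\dV\le C\,2^{j(2n-2)}\Bigl(\int_{A_j}S^n\dV\Bigr)^{1/n}.
\]
Let $a_j=\int_{A_j}S^n\dV$; by \eqref{eq:log-integral}, $\sum_{j\le J}a_j=o(J)$. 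The growth of $u$ up to radius $2^J$ is then controlled by $\sum_{j\le J}a_j^{1/n}$. By H\"older again, this sum is at most $J^{1-1/n}\,o(J)^{1/n}=o(J)=o(\log 2^J)$. So dyadic localization is exactly what removes the extra $(\log R)^{1/n}$ loss from Step 2.

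\textbf{Step 4 (conclude).} This gives $\sum_j 2^{-j(2n-2)}\int_{A_j}S\dV$ up to scale $2^J$ equal to $o(J)$. That is the annular form of Ni's averaged condition, namely $\int_0^R s\,k(s)\,ds=o(\log R)$ up to comparable constants. I would check in \cite{Ni12} (or \cite{Liu22}) that the gap theorem accepts this annular version; it does, since the proof only uses $u=o(\log r)$ for the potential.

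Ni's theorem then gives that $u$ is a pluriharmonic-type function of sublogarithmic growth with $\ddc u=\rho\ge0$. Via the plurisubharmonic Liouville/three-circle argument under nonnegative bisectional curvature this forces $\rho\equiv0$, hence flatness, because Ricci-flat with nonnegative bisectional curvature implies flat.
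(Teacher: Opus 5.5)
Your argument is correct. It shares the paper's first step but then takes a different route. Both proofs rest on the annular H\"older bound $\int_{A_j}S\dV\le C\,2^{j(2n-2)}a_j^{1/n}$, where $a_j=\int_{A_j}S^n\dV$.

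The paper uses this bound only along a subsequence with $a_{j_k}\to0$. To pass from annuli to balls it then needs the existence of $\ell=\lim_{R\to\infty}R^{2-2n}\int_{B(p,R)}S\dV$ from \cite[Theorems~2 and~3]{Liu24}. This yields $R^2k(R)\to0$, which is the standard $o(r^{-2})$ hypothesis of \cite{Ni12}.

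You instead sum over all annuli and use the discrete H\"older inequality $\sum_{j\le J}a_j^{1/n}\le(J+1)^{1-1/n}\bigl(\sum_{j\le J}a_j\bigr)^{1/n}=o(J)$. Exchanging the order of integration is valid because the tail $\int_{2^j}^\infty s^{1-2n}\,ds\asymp2^{j(2-2n)}$ is finite for $n\ge2$. This gives $\int_0^Rs\,k(s)\,ds=o(\log R)$ from Bishop--Gromov and Euclidean volume growth alone. Your route bypasses \cite{Liu24} entirely; the price is that you need the gap theorem under this weaker averaged hypothesis.

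That input is available, but not for the reason you give. \cite{Ni12} is not proved by constructing a Poincar\'e--Lelong potential with $u=o(\log r)$. It works with the Hodge--Laplace heat equation for closed positive $(1,1)$-forms together with a monotonicity estimate. Your sketch in Steps~3--4 would require actually solving $\ddc u=\rho$ and proving an $o(\log r)$ growth bound for $u$, which you have not done.

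The correct citation is the ``positive mass'' form of \cite{Ni12}: if $M$ is not flat, then $\liminf_{r\to\infty}r^2k(r)>0$. Your condition $\int_0^Rs\,k(s)\,ds=o(\log R)$ forces $\liminf_{s\to\infty}s^2k(s)=0$, since otherwise $s\,k(s)\ge c/s$ for all large $s$. Flatness then follows immediately, and the discussion of $u$ and the Liouville argument can be deleted.
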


\begin{proof}
By \cite[Theorems~2 and~3]{Liu24} and the existence of the asymptotic
volume ratio, the limit
\begin{equation}\label{eq:scalar-limit}
  \ell=\lim_{R\to\infty}R^{2-2n}
            \int_{B(p,R)}S\dV
\end{equation}
exists and is finite. Set
$A_j=B(p,2^{j+1})\setminus B(p,2^j)$. Condition
\eqref{eq:log-integral} gives
\[
  \sum_{j=0}^{N-1}\int_{A_j}S^n\dV=o(N).
\]
Hence there is a sequence $j_k\to\infty$ such that
$\int_{A_{j_k}}S^n\dV\to0$. Put $R_k=2^{j_k}$. By H\"older's
inequality and Bishop--Gromov volume comparison,
\begin{align*}
 R_k^{2-2n}\int_{B(p,2R_k)\setminus B(p,R_k)}S\dV
 &\le R_k^{2-2n}\Vol(B(p,2R_k))^{(n-1)/n}
       \left(\int_{A_{j_k}}S^n\dV\right)^{1/n}\\
 &\le C(n)\left(\int_{A_{j_k}}S^n\dV\right)^{1/n}
       \longrightarrow0.
\end{align*}
On the other hand, \eqref{eq:scalar-limit} implies that the left hand
side converges to $(2^{2n-2}-1)\ell$. Since $n\ge2$, we obtain
$\ell=0$. Euclidean volume growth now gives
\[
  \lim_{R\to\infty}
    \frac{R^2}{\Vol(B(p,R))}\int_{B(p,R)}S\dV=0.
\]
By the second author's optimal gap theorem~\cite{Ni12}, $M$ is flat.
\end{proof}

\begin{proof}[Proof of Theorems~\ref{thm:euclidean-bk}--\ref{thm:logarithmic}]
It suffices to prove Theorem~\ref{thm:logarithmic}, since a uniform
positive pinching constant implies \eqref{eq:log-pinching}.
Proposition~\ref{prop:top-mass-euclidean} and Lemma~\ref{lem:log-integral}
give \eqref{eq:log-integral} in both cases. If the bisectional curvature
is nonnegative, Lemma~\ref{lem:euclidean-gap} implies that $M$ is flat.
If the Ricci curvature is nonnegative and the curvature has quadratic
decay, \cite[Corollary~7]{Liu24} implies that $M$ is Ricci flat.
\end{proof}

\begin{remark}\label{rem:uniform-integrability}
Under a uniform pinching condition, \eqref{eq:ratio-determinant} and
Proposition~\ref{prop:top-mass-euclidean} give the stronger conclusion
$\int_M S^n\dV<\infty$. The logarithmic weight in
\eqref{eq:weighted-mass} is the only additional point needed for
Theorem~\ref{thm:logarithmic}.
\end{remark}

\section{Two estimates on solutions of the scalar heat equation}\label{sec:scalar}

We record the scalar estimates needed to control the boundary flux.
Their proofs require no uniform lower bound for the volume of balls.

\begin{lemma}\label{lem:scalar}
Let $(M,g)$ be a connected complete noncompact Riemannian manifold with
$\Ric_g\ge0$, and let $2\le p<\infty$. For $f\in L^p(M)$,
\begin{equation}\label{eq:scalar-estimates}
 \norm{P_tf}_{L^p}\le\norm{f}_{L^p},
 \qquad
 \norm{\nabla P_tf}_{L^p}
 \le C_p t^{-1/2}\norm{f}_{L^p}
 \quad(t>0).
\end{equation}
Moreover,
\begin{equation}\label{eq:scalar-decay}
  \lim_{t\to\infty}\norm{P_tf}_{L^p}=0.
\end{equation}
The same statements hold after a fixed positive rescaling of the scalar
Laplacian.
\end{lemma}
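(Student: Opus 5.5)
The plan is to prove the three assertions separately, using only the contraction property of the heat semigroup, the Bakry--\'Emery (Bochner) inequality coming from $\Ric_g\ge0$, and the fact that a complete noncompact manifold with $\Ric_g\ge0$ has infinite volume (Calabi--Yau). Rescaling the Laplacian by a fixed constant only rescales time, which changes $C_p$ but nothing else, so the final sentence of the lemma is immediate.

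\textbf{Contraction.} On a complete manifold with $\Ric_g\ge0$ the heat kernel is stochastically complete and positive, so $P_t$ is Markovian. Hence $\norm{P_tf}_{L^\infty}\le\norm f_{L^\infty}$, and by symmetry $\norm{P_tf}_{L^1}\le\norm f_{L^1}$. Riesz--Thorin then gives the $L^p$ contraction for all $1\le p\le\infty$. Alternatively, one multiplies $\partial_tu=\Delta u$ by $|u|^{p-2}u\,\phi^2$ for a cutoff $\phi$ and lets the cutoff expand.

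\textbf{Gradient estimate.} I would use the pointwise Bakry--\'Emery estimate valid under $\Ric\ge0$:
\[
|\nabla P_tf|^2\le \frac{C}{t}\bigl(P_t(f^2)-(P_tf)^2\bigr)\le \frac{C}{t}P_t(f^2).
\]
This follows by differentiating $s\mapsto P_s\bigl((P_{t-s}f)^2\bigr)$ and using the Bochner inequality $\Delta|\nabla u|^2\ge 2\langle\nabla u,\nabla\Delta u\rangle+2|\nabla^2u|^2$, which gives $|\nabla P_tf|^2\le P_t|\nabla f|^2$ and then the reverse Poincar\'e form. Taking $L^{p/2}$ norms, with $p/2\ge1$, and using the contraction on $L^{p/2}$ applied to $f^2\in L^{p/2}$ gives
\[
\norm{\nabla P_tf}_{L^p}^2=\norm{|\nabla P_tf|^2}_{L^{p/2}}\le \frac Ct\norm{f^2}_{L^{p/2}}=\frac Ct\norm f_{L^p}^2.
\]
This is where $p\ge2$ is used. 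Justifying the semigroup calculus for $f$ merely in $L^p$ would be done by density of $C_c^\infty$ together with the local-uniform smoothness of $P_tf$.

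\textbf{Decay.} By the contraction, $\norm{P_tf}_{L^p}$ is nonincreasing in $t$, and by density it suffices to take $f\in C_c^\infty$. Interpolation gives
\[
\norm{P_tf}_{L^p}\le\norm{P_tf}_{L^\infty}^{1-2/p}\norm{P_tf}_{L^2}^{2/p}\le \norm{P_tf}_{L^\infty}^{1-2/p}\norm f_{L^2}^{2/p},
\]
so for $p>2$ it is enough to show $\norm{P_tf}_{L^\infty}\to0$. Li--Yau gives $H(x,y,t)\le C/\Vol(B(x,\sqrt t))$. For $y\in\supp f$ and $x$ arbitrary, volume comparison gives $\Vol(B(x,\sqrt t))\ge c\,\Vol(B(y,\sqrt t))$ when $d(x,y)\le\sqrt t$. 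When $d(x,y)>\sqrt t$, the Gaussian factor $e^{-d^2/5t}$ in the Li--Yau upper bound together with relative volume comparison handles it. Hence
\[
\sup_x|P_tf(x)|\le C\norm f_{L^1}\big/\Vol\bigl(B(o,\sqrt t)\bigr)\to0,
\]
because the volume is infinite.

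For $p=2$, I would instead use the spectral theorem: $\norm{P_tf}_2^2=\int e^{-2t\lambda}\,d\mu_f(\lambda)\to\mu_f(\{0\})$, which vanishes because there are no nonzero $L^2$ harmonic functions. That in turn holds by Yau's Liouville theorem together with infinite volume, since such a function is constant and in $L^2$. This decay step is the main obstacle, because no lower volume bound is assumed. The uniform-in-$x$ heat kernel bound has to come from comparing to balls centred on the compact support, as above, rather than from a global volume lower bound.
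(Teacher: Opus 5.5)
Your proof is correct. For the $L^p$ contraction, the reverse Poincar\'e gradient bound (Bochner along $s\mapsto P_s\bigl((P_{t-s}f)^2\bigr)$, then the $L^{p/2}$ contraction applied to $f^2$), and the $L^2$ decay via the spectral theorem and the absence of nonzero $L^2$ harmonic functions, you follow the paper essentially verbatim. The only differences there are that the paper proves infinite volume by an elementary disjoint-balls argument along a ray rather than citing Calabi--Yau, and proves the $L^2$ Liouville property by a cutoff (Caccioppoli) estimate rather than citing Yau.

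The genuine difference is the decay for $p>2$. You interpolate $L^\infty$ \emph{decay} against $L^2$ boundedness, which forces you to bring in the Li--Yau Gaussian upper bound and volume comparison. The paper interpolates the other way round. For bounded compactly supported $h$,
\[
\norm{P_th}_{L^p}^p\le\norm{P_th}_{L^\infty}^{p-2}\norm{P_th}_{L^2}^2\le\norm{h}_{L^\infty}^{p-2}\norm{P_th}_{L^2}^2\longrightarrow0,
\]
so only $L^\infty$ contractivity and the $L^2$ decay you already proved are needed, and no heat kernel bounds enter. In particular, the ``main obstacle'' you identify, the lack of a lower volume bound, never arises once $p=2$ is settled.

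What your route buys is a quantitative rate, $\norm{P_tf}_{L^\infty}\lesssim\norm{f}_{L^1}/\Vol B(o,\sqrt t)$. The paper does not need this; its later remark stresses that qualitative $L^n$ decay suffices. If you keep your route, the case distinction on $d(x,y)$ is unnecessary. By symmetry, $H(x,y,t)=H(y,x,t)\le C/\Vol B(y,\sqrt t)$, which puts the volume directly on balls centred in $\supp f$.
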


\begin{proof}
The heat semigroup is positivity preserving and contractive on $L^p$
(cf.\ Proposition~15.1 of \cite{Li12}).
For a bounded compactly supported smooth function $f$, the Bochner
inequality under $\Ric_g\ge0$ implies the reverse Poincar\'e estimate
\begin{equation}\label{eq:reverse-poincare}
  |\nabla P_tf|^2
  \le\frac{C}{t}\bigl(P_t(f^2)-(P_tf)^2\bigr)
  \le\frac{C}{t}P_t(f^2).
\end{equation}
One way to see the first inequality is to differentiate
\[
  s\longmapsto P_s\bigl((P_{t-s}f)^2\bigr),
  \qquad 0\le s\le t.
\]
Its derivative is a fixed positive multiple (depending on the definitions
of the Laplacian and gradient) of $P_s|\nabla P_{t-s}f|^2$. The
gradient-contraction inequality, which follows from the Bochner formula
and nonnegativity of Ricci curvature, bounds this from below by the same
multiple of $|\nabla P_tf|^2$. Integration in $s$ gives the first
estimate in \eqref{eq:reverse-poincare}.

Since $p/2\ge1$, Jensen's inequality and the $L^p$ contraction
mentioned above give
\[
 \begin{split}
  \int_M|\nabla P_tf|^p\dV
  &\le C_p t^{-p/2}
       \int_M\bigl(P_t(f^2)\bigr)^{p/2}\dV\\
  &\le C_p t^{-p/2}\norm{f}_{L^p}^{p}.
 \end{split}
\]
Approximation by compactly supported smooth functions proves the gradient
estimate for all $f\in L^p$.

We next prove \eqref{eq:scalar-decay}. First, $M$ has infinite volume.
For an elementary verification, let $\gamma$ be a minimizing ray from $o$.
The balls
\[
  B\bigl(\gamma(4^j),4^j/2\bigr),\qquad j=1,2,\ldots,
\]
are pairwise disjoint. Bishop--Gromov comparison and
$B(o,1)\subset B(\gamma(r),r+1)$ show that each of these balls has volume
bounded below by a fixed positive multiple of $\Vol B(o,1)$. Their union
therefore has infinite volume.

The $L^2$ kernel of $-\Delta$ is zero. Indeed, if $h\in L^2$ is harmonic,
standard distance cutoffs $\chi_R$ give
\[
  \int_M\chi_R^2|\nabla h|^2\dV
  \le \frac{C}{R^2}\norm{h}_{L^2}^2.
\]
Letting $R\to\infty$ shows that $h$ is constant. Infinite volume then gives
$h=0$. The spectral theorem for the nonnegative self-adjoint realization
of $-\Delta$ consequently yields
\begin{equation}\label{eq:L2-decay}
  \norm{P_th}_{L^2}\longrightarrow0
  \qquad(h\in L^2).
\end{equation}
The local parabolic mean-value estimate in Theorem~14.7 of
\cite{Li12} also gives
$\norm{P_th}_{L^\infty(K)}\longrightarrow0$ for every compact
$K\subset M$. Only the $L^2$ convergence in \eqref{eq:L2-decay} is
needed below.

If $h$ is bounded and compactly supported, then
\[
  \norm{P_th}_{L^p}^{p}
  \le \norm{h}_\infty^{p-2}\norm{P_th}_{L^2}^{2}
  \longrightarrow0.
\]
For $p=2$, this is simply \eqref{eq:L2-decay}. Density of bounded compactly
supported functions in $L^p$, together with $L^p$-contractivity, proves
\eqref{eq:scalar-decay}.
\end{proof}

\section{A critical-integrability vanishing theorem}\label{sec:forms}

Recall that a K\"ahler manifold has \emph{nonnegative quadratic orthogonal
bisectional curvature} if, at every point and for every unitary frame
$e_1,\ldots,e_n$,
\begin{equation}\label{eq:qobc}
  \sum_{i,j=1}^n R_{i\bar i j\bar j}(a_i-a_j)^2\ge0
  \qquad\text{for all }a_1,\ldots,a_n\in\R.
\end{equation}
Here $R_{i\bar i j\bar j}=R(e_i,\overline{e_i},e_j,\overline{e_j})$.
Nonnegative holomorphic bisectional curvature implies both
\eqref{eq:qobc} and nonnegative Ricci curvature. In this section we assume
only these latter two conditions.

We first specify the closed form evolution that will be used. This is
important because cutting off a closed $(1,1)$-form does not preserve
closedness. The curvature condition \eqref{eq:qobc} gives norm domination
for the evolution; no preservation of semipositivity is needed.

\begin{lemma}[Closed Hodge heat evolution]\label{lem:hodge}
Let $(M^n,\omega)$, $n\ge2$, be a connected complete noncompact K\"ahler
manifold with nonnegative Ricci curvature and nonnegative quadratic
orthogonal bisectional curvature. Let $\alpha$ be a smooth real
$(1,1)$-form such that
\[
  d\alpha=0,
  \qquad F:=|\alpha|\in L^n(M),
  \qquad f:=\Lambda_\omega\alpha.
\]
There is a smooth real Hodge heat evolution $\eta(x,t)$ with initial
value $\alpha$ for which
\begin{equation}\label{eq:heat-form}
  \eta_t+\Delta_{\bar\partial}\eta=0,
  \qquad d\eta(x,t)=0,
\end{equation}
\begin{equation}\label{eq:norm-domination}
  |\eta(x,t)|\le P_tF(x),
\end{equation}
and
\begin{equation}\label{eq:trace-heat}
  u(x,t):=\Lambda_\omega\eta(x,t)=P_tf(x).
\end{equation}
In particular,
\begin{equation}\label{eq:exact-evolution}
  \eta_t=\ddc u.
\end{equation}
The evolution converges locally to $\alpha$ as $t\downarrow0$.
No sign condition is imposed on $\alpha$ or asserted for $\eta(t)$.
\end{lemma}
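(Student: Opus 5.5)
We construct $\eta$ as a limit of Dirichlet (or exhaustion) solutions and obtain the key properties from the Kähler Bochner--Weitzenböck formula. This follows the closed Hodge heat evolution of Tam and the second author~\cite{NT13}.

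\emph{Construction.} Take an exhaustion of $M$ by smooth relatively compact domains $\Omega_k$. On each $\Omega_k$, solve the Hodge heat equation $\eta_t+\Delta_{\bar\partial}\eta=0$ with initial value a cutoff of $\alpha$, under absolute/relative boundary conditions that commute with $d$. Extract a locally smooth limit $\eta$ using local parabolic estimates, together with the uniform bound below. Alternatively, one can define $\eta(t)=e^{-t\Delta_{\bar\partial}}\alpha$ via the $L^2$ Hodge Laplacian for $L^2$ data and approximate. Closedness is preserved because $\Delta_{\bar\partial}=\frac12\Delta_d$ on a Kähler manifold commutes with $d$. Hence $d\eta$ solves the heat equation with zero initial value and vanishes by uniqueness, given the growth bound coming from norm domination. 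The same commutation of $\Lambda_\omega$ with $\Delta_{\bar\partial}$, the Kähler identity, shows that $u=\Lambda_\omega\eta$ solves the scalar heat equation with initial value $f$.

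\emph{Norm domination.} For a real $(1,1)$-form, the Weitzenböck formula gives
\[
\Delta_{\bar\partial}\eta=-\tfrac12(\nabla\nabla+\bar\nabla\bar\nabla)\eta+\mathcal R(\eta).
\]
When $\eta$ is diagonalized in a unitary frame with eigenvalues $a_i$, the curvature term satisfies
\[
\langle\mathcal R(\eta),\eta\rangle=\tfrac12\sum_{i,j}R_{i\bar ij\bar j}(a_i-a_j)^2\ge0
\]
by~\eqref{eq:qobc}. This is exactly where nonnegative quadratic orthogonal bisectional curvature enters. It follows that $(\partial_t-\Delta)|\eta|^2\le-2|\nabla\eta|^2$, so Kato's inequality gives $(\partial_t-\Delta)|\eta|\le0$ in the barrier sense. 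On each $\Omega_k$, or globally via a maximum principle for subsolutions with $L^n$-type growth, we compare with $P_tF$ and obtain~\eqref{eq:norm-domination}. Lemma~\ref{lem:scalar} guarantees that $P_tF$ is finite and in $L^n$ with its gradient bound. Nonnegative Ricci curvature supplies the heat kernel estimates needed for the maximum principle in the noncompact setting.

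\emph{Trace formula and exactness.} We already know that $u$ solves the scalar heat equation with $u(0)=f\in L^n$, since $|f|\le\sqrt n\,F$. Uniqueness of such solutions, given $|u|\le\sqrt n\,P_tF$, identifies $u=P_tf$. For~\eqref{eq:exact-evolution}, use $d\eta=0$ and the Kähler identities: for a closed $(1,1)$-form,
\[
\Delta_{\bar\partial}\eta=\bar\partial\bar\partial^*\eta=-\ddc\Lambda_\omega\eta .
\]
This holds because $\bar\partial^*\eta=-\sqrt{-1}[\Lambda,\partial]\eta=\sqrt{-1}\,\partial\Lambda\eta$ when $\partial\eta=0$, and it gives $\eta_t=\ddc u$ with the paper's sign conventions. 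Local convergence to $\alpha$ follows from smooth convergence of the approximations up to $t=0$ on compact sets.

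\emph{Main obstacle.} The hardest step is the noncompact limiting and uniqueness argument. We must pass from the exhaustion to a global solution that stays closed and satisfies~\eqref{eq:norm-domination}, with no volume lower bound and only $L^n$ initial data. I would first try to run everything with $L^2\cap L^\infty$ truncations $\alpha_k$, which are not closed, then restore closedness by taking the $L^2$ harmonic-projection-compatible semigroup. If that fails, I would fall back on the construction in~\cite{NT13}, which handles exactly this closed evolution under a Kato-type domination.
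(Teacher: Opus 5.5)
Your downstream steps match the paper's proof. These are the identification $u=P_tf$ by uniqueness in the class $|u|\le\sqrt n\,P_tF$ (uniformly $L^n$-bounded by contractivity), the exactness $\eta_t=\ddc u$ from $\bar\partial^*\eta=\sqrt{-1}\,\partial\Lambda_\omega\eta$ for closed $\eta$, and the role of \eqref{eq:qobc} as the sign of the Weitzenb\"ock term that makes $|\eta|$ a subsolution. The paper, however, does not construct the evolution itself. It checks the growth hypothesis of \cite[Theorem~4.1]{NT13} by H\"older, $k_F(R)\le V(o,R)^{-1/n}\norm{F}_{L^n}$, which is bounded for $R\ge1$ and hence $o(R^2)$. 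It then takes existence, reality, closedness, domination and local initial convergence from that theorem. Your fallback is exactly this citation, but you never verify its hypothesis, and that one line is needed.

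The construction you put first has a genuine gap at the boundary. Absolute and relative conditions are the ones for which $d$ intertwines the semigroup, but they do not control $|\eta_k|$ on $\partial\Omega_k$:
\begin{itemize}
\item relative: $|\eta_k|$ need not vanish there, since the normal component survives;
\item absolute: $\partial_\nu|\eta_k|\le0$ can fail, since the boundary term is a second-fundamental-form term with no sign on a general exhaustion.
\end{itemize}
So the comparison $|\eta_k|\le P_tF$ on $\Omega_k$ is unavailable. Also, with cutoff data $\chi_k\alpha$ the approximants are not closed in the first place, since $d(\chi_k\alpha)=d\chi_k\wedge\alpha$. Full Dirichlet conditions give domination by the maximum principle, but they do not commute with $d$. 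Your claim that $d\eta$ vanishes by uniqueness, given the growth bound from norm domination, also does not follow as stated. Domination bounds $|\eta|$, not $|d\eta|$, and $3$-forms have no Bochner subsolution property.

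One repair:
\begin{itemize}
\item Use Dirichlet approximants on real $(1,1)$-forms with data $\chi_k\alpha$, and pass to a local smooth limit on $M\times[0,T]$ using $|\eta_k|\le P_tF$.
\item The cutoff energy identity for $\eta$, together with domination and H\"older, shows that $\int_0^T\int_{B(o,R)}|d\eta|^2$ grows at most polynomially in $R$.
\item The Karp--Li weighted $L^2$ uniqueness argument then gives $d\eta\equiv0$; for the Hodge Laplacian on $3$-forms it uses only integration by parts and no curvature sign.
\end{itemize}
Your $L^2$-semigroup alternative has the same defect, since $\alpha\notin L^2$ in general and truncation destroys closedness. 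Recovering closedness there would need off-diagonal (Davies--Gaffney) estimates.
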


\begin{proof}
For a fixed base point $o$, put
\[
  k_F(R)=\frac1{V(o,R)}\int_{B(o,R)}F\dV.
\]
H\"older's inequality gives
\begin{equation}\label{eq:average-growth}
  k_F(R)\le V(o,R)^{-1/n}\norm{F}_{L^n}.
\end{equation}
In particular, $k_F(R)=o(R^2)$. The Hodge--Laplace heat construction
of Tam and the second author~\cite[Theorem~4.1]{NT13}, whose curvature hypotheses are
precisely nonnegative Ricci curvature and \eqref{eq:qobc}, therefore
applies and gives a closed evolution. Reality and the domination
\eqref{eq:norm-domination} are part of that construction; see also
\cite[Lemmas~3.1 and~3.2]{NT13}. The Bochner inequality supplied by
\eqref{eq:qobc} makes the norm of a homogeneous $(1,1)$-form heat
solution a scalar heat subsolution, which is the input for this
pointwise domination.

Tracing the Hodge heat equation gives the scalar heat equation with
initial value $f$. Since $|f|\le C_nF$ and
$|u(t)|\le C_nP_tF$, scalar $L^n$-contractivity gives a uniform $L^n$
bound for $u$ on every finite time interval. Uniqueness in this class,
or \cite[Lemma~2.2]{NT13}, identifies $u$ with $P_tf$ and proves
\eqref{eq:trace-heat}.

Finally, the K\"ahler identities for a closed $(1,1)$-form give
\[
  -\Delta_{\bar\partial}\eta
  =\sqrt{-1}\,\partial\bar\partial\Lambda_\omega\eta;
\]
see \cite[Lemma~2.1]{NT13}. This proves \eqref{eq:exact-evolution}.
The local initial convergence is part of the heat construction.
\end{proof}

\begin{theorem}[Critical $L^n$ vanishing]\label{thm:critical}
Under the hypotheses of Lemma~\ref{lem:hodge}, the top-degree integral
is absolutely convergent and
\begin{equation}\label{eq:signed-vanish}
  \int_M\alpha^n=0.
\end{equation}
If, in addition, $\alpha\ge0$, then
\begin{equation}\label{eq:rank-vanish}
  \alpha^n\equiv0.
\end{equation}
In this semipositive case, $\alpha$ cannot be positive definite at any
point. For a semipositive form, the integrability hypothesis may
equivalently be written as $\Lambda_\omega\alpha\in L^n(M)$.
\end{theorem}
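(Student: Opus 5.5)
Absolute convergence is immediate: pointwise $|\alpha^n|\le C_n|\alpha|^n\,dV=C_nF^n\,dV$ and $F\in L^n$. For the vanishing, I would run the closed Hodge heat evolution $\eta(t)$ of Lemma~\ref{lem:hodge} and study
\[
  I(t)=\int_M\eta(t)^n .
\]
The plan is to show that $I$ is constant in $t$ and tends to zero as $t\to\infty$. Continuity at $t=0$ then gives $I(0)=\int_M\alpha^n=0$.

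\textbf{Decay at infinity.} By \eqref{eq:norm-domination}, $|I(t)|\le C_n\int_M(P_tF)^n\dV=C_n\norm{P_tF}_{L^n}^n$. This tends to $0$ by \eqref{eq:scalar-decay} of Lemma~\ref{lem:scalar} with $p=n\ge2$. The same domination, together with $L^n$-contractivity, bounds $|\eta(t)|^n$ by the integrable function $(P_tF)^n$, uniformly in norm on $[0,T]$.

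\textbf{Initial value.} I would show $I(t)\to I(0)$ as $t\downarrow0$. Local convergence $\eta\to\alpha$ handles compact sets. For tails, $\int_{M\setminus B_R}(P_tF)^n\dV$ is small uniformly for small $t$, because $P_tF\to F$ in $L^n$ (strong continuity of the semigroup on $L^n$). So the integrand is uniformly $L^1$-controlled, and Vitali/dominated convergence applies.

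\textbf{Conservation.} This is the main obstacle. By \eqref{eq:exact-evolution},
\[
  \partial_t\eta^n=n\,\ddc u\wedge\eta^{n-1}=n\,d\bigl(d^cu\wedge\eta^{n-1}\bigr),
\]
using $d\eta=0$. So the derivative is exact, but one must kill the boundary flux. Take a cutoff $\phi_R$ with $|\nabla\phi_R|\le C/R$ supported in the annulus $B_{2R}\setminus B_R$, and integrate in time over $[t_1,t_2]$:
\[
  \Bigl|\int_M\phi_R\,\eta(t_2)^n-\int_M\phi_R\,\eta(t_1)^n\Bigr|
  \le \frac{Cn}{R}\int_{t_1}^{t_2}\!\!\int_{B_{2R}\setminus B_R}|\nabla u|\,(P_tF)^{n-1}\dV\,dt .
\]
Apply H\"older with exponents $n$ and $n/(n-1)$, then the gradient estimate of Lemma~\ref{lem:scalar}, namely $\norm{\nabla P_tf}_{L^n}\le C t^{-1/2}\norm{f}_{L^n}$. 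This bounds the right side by
\[
  \frac{C}{R}\int_{t_1}^{t_2}t^{-1/2}\,dt\;\norm{F}_{L^n}^{n},
\]
which is finite even with $t_1=0$ and tends to $0$ as $R\to\infty$. (Alternatively, restricting to the annulus gives smallness without the $1/R$ factor.) Letting $R\to\infty$, using the uniform integrability above, gives $I(t_2)=I(t_1)$. The delicate points are the parabolic regularity needed to justify differentiating under the integral on compact sets, and the integrable $t^{-1/2}$ singularity at $t=0$. Both are covered by the estimates already quoted. Combining the three steps yields \eqref{eq:signed-vanish}.

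\textbf{Semipositive case.} If $\alpha\ge0$, then $\alpha^n=n!\bigl(\prod_j\lambda_j\bigr)dV\ge0$ pointwise. Since it is smooth and has zero integral, $\alpha^n\equiv0$, so $\alpha$ is nowhere positive definite. Finally, for $\alpha\ge0$ the norm and trace are comparable: $|\alpha|\le\Lambda_\omega\alpha\le\sqrt n\,|\alpha|$. Hence $F\in L^n$ if and only if $\Lambda_\omega\alpha\in L^n$.
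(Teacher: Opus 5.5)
Your proposal is correct and follows essentially the same approach as the paper. Both arguments use the closed Hodge heat evolution and the cutoff flux identity $\partial_t\eta^n=n\,d(d^cu\wedge\eta^{n-1})$. The flux is bounded via H\"older and the $t^{-1/2}$ gradient estimate, and $L^n$ heat decay then kills the integral as $t\to\infty$.

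The only difference is at $t=0$. You show $I(t)\to I(0)$ globally via Vitali, which uses strong continuity of $P_t$ on $L^n$. The paper instead fixes $R$ and lets $a\downarrow0$ in the localized identity using only local convergence, then sends $R\to\infty$ using the integrability of $\mu_b$ and $\mu_0$. It therefore never needs $L^n$-continuity of the semigroup.
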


\begin{proof}
Let $\eta(t)$ be the evolution of Lemma~\ref{lem:hodge}, and let
$u=P_tf$. Define, also at $t=0$ with $\eta(0)=\alpha$,
\[
  \mu_t=\frac{\eta(t)^n}{n!},
  \qquad J(t)=\int_M\mu_t.
\]
The form $\mu_t$ may have either sign. If $\mu_t=q_t\,dV$, write
$|\mu_t|=|q_t|\,dV$ for its total variation. The elementary pointwise
bound for wedge products, followed by \eqref{eq:norm-domination}, gives
\begin{equation}\label{eq:mass-upper}
  |\mu_t|\le C_n|\eta(t)|^n\,dV
  \le C_n(P_tF)^n\,dV
  \qquad(t>0).
\end{equation}
The analogous bound at $t=0$ is
$|\mu_0|\le C_nF^n\,dV$. Thus all these top-degree integrals are
absolutely convergent. By Lemma~\ref{lem:scalar},
\begin{equation}\label{eq:mass-finite}
  |J(t)|\le\int_M|\mu_t|
  \le C_n\norm{P_tF}_{L^n}^{n}
  \le C_n\norm{F}_{L^n}^{n}.
\end{equation}
Moreover,
\begin{equation}\label{eq:form-gradient-bounds}
  \norm{\eta(t)}_{L^n}\le\norm{F}_{L^n},
  \qquad
  \norm{\nabla u(t)}_{L^n}
  \le C_n t^{-1/2}\norm{f}_{L^n}
  \le C_n t^{-1/2}\norm{F}_{L^n}.
\end{equation}

We now justify conservation of $J(t)$, including its initial value.
Choose compactly supported distance cutoffs $\chi_R$ with
\[
  0\le\chi_R\le1,\qquad
  \chi_R=1\text{ on }B(o,R),\qquad
  \supp\chi_R\subset B(o,2R),\qquad
  |d\chi_R|\le C/R.
\]
Lipschitz cutoffs are sufficient, and the integrations by parts below
are valid by smooth approximation on their compact supports. Fix
$0<a<b$. Since $d\eta(t)=0$ and $\eta_t=\ddc u$, we have
\begin{align}
 \int_M\chi_R(\mu_b-\mu_a)
 &=\frac1{(n-1)!}
   \int_a^b\int_M\chi_R\,\ddc u\wedge\eta^{n-1}\,dt\notag\\
 &=-\frac1{(n-1)!}
   \int_a^b\int_Md\chi_R\wedge d^cu\wedge\eta^{n-1}\,dt.
 \label{eq:flux}
\end{align}
The algebraic estimate $|\eta^{n-1}|\le C_n|\eta|^{n-1}$ does not
require a sign condition. It yields
\begin{align}
 \left|\int_M\chi_R(\mu_b-\mu_a)\right|
 &\le\frac{C_n}{R}
       \int_a^b\int_M|\nabla u|\,|\eta|^{n-1}\dV\,dt\notag\\
 &\le\frac{C_n}{R}
       \int_a^b\norm{\nabla u(t)}_{L^n}
                    \norm{\eta(t)}_{L^n}^{n-1}\,dt\notag\\
 &\le\frac{C_n\norm{F}_{L^n}^{n}}{R}
       \int_a^bt^{-1/2}\,dt\notag\\
 &\le\frac{C_n\norm{F}_{L^n}^{n}}{R}
       (\sqrt b-\sqrt a).
 \label{eq:flux-bound}
\end{align}
Here we used H\"older's inequality and
\eqref{eq:form-gradient-bounds}.

Keep $R$ fixed and let $a\downarrow0$. The local convergence of
$\eta(a)$ to $\alpha$ on the compact support of $\chi_R$ gives
\begin{equation}\label{eq:initial-flux}
  \left|\int_M\chi_R(\mu_b-\mu_0)\right|
  \le \frac{C_n\norm{F}_{L^n}^{n}}{R}\sqrt b.
\end{equation}
Since both $\mu_b$ and $\mu_0$ are absolutely integrable, dominated
convergence now allows $R\to\infty$. We conclude that
\begin{equation}\label{eq:mass-conservation}
  J(b)=J(0)=\int_M\frac{\alpha^n}{n!}
  \qquad(b>0).
\end{equation}
This order of limits uses only local convergence at initial time; no
interchange of a global integral with $a\downarrow0$ has been assumed.
The integrability of $t^{-1/2}$ near zero is what permits conservation
back to the initial form.

On the other hand, \eqref{eq:scalar-decay} and
\eqref{eq:mass-finite} give
\[
  |J(t)|\le C_n\norm{P_tF}_{L^n}^{n}
  \longrightarrow0\qquad(t\to\infty).
\]
Together with \eqref{eq:mass-conservation}, this proves
\eqref{eq:signed-vanish}. If $\alpha\ge0$, then $\alpha^n$ is a smooth
nonnegative measure, so its zero integral implies
\eqref{eq:rank-vanish}. In that case $|\alpha|$ and
$\Lambda_\omega\alpha$ are pointwise comparable up to dimensional
constants, which proves the final assertion.

Notice that we have not inferred pointwise vanishing of $\eta(t)^n$
from $J(t)=0$, since $\eta(t)$ need not be semipositive. The sign of the
\emph{initial} form is used only after its integral has been identified
with the conserved signed integral.
\end{proof}

\begin{theorem}[Finite-mass rigidity]\label{thm:finite}
Let $(M^n,\omega)$, $n\ge2$, be a connected complete noncompact K\"ahler
manifold with nonnegative Ricci curvature and nonnegative quadratic
orthogonal bisectional curvature. Let $\rho$ be its Ricci form and put
$S=\Lambda_\omega\rho$. Suppose that, for a constant $\eps>0$,
\begin{equation}\label{eq:trace-pinching}
  \rho\ge\eps S\omega,
  \qquad
  \int_M\rho^n<\infty.
\end{equation}
Then $\Rm_g\equiv0$.
\end{theorem}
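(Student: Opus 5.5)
Apply Theorem~\ref{thm:critical} to $\alpha=\rho$. The Ricci form is a smooth closed real $(1,1)$-form, and $\rho\ge0$ by nonnegative Ricci curvature. So the only hypothesis to verify is $|\rho|\in L^n(M)$, or equivalently, by the final assertion of Theorem~\ref{thm:critical}, $S\in L^n(M)$.

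This follows from the pinching $\rho\ge\eps S\omega$ exactly as in \eqref{eq:ratio-determinant}. Let $0\le\lambda_1\le\cdots\le\lambda_n$ be the eigenvalues of $\rho$ with respect to $\omega$, so that $\sum_j\lambda_j=S$ and $\lambda_1\ge\eps S$. Then
\[
  \prod_j\lambda_j\ge\lambda_1^{n-1}\lambda_n\ge\eps^{n-1}S^{n-1}\cdot\frac{S}{n},
\]
hence
\[
  S^n\dV\le\frac{n}{n!\,\eps^{n-1}}\,\rho^n .
\]
Therefore $\int_MS^n\dV\le C(n,\eps)\int_M\rho^n<\infty$. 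Since $|\rho|\le S$ pointwise for semipositive $\rho$, we get $|\rho|\in L^n$.

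Theorem~\ref{thm:critical} now gives $\rho^n\equiv0$, that is, $\prod_j\lambda_j\equiv0$ and so $\lambda_1\equiv0$. The pinching then forces $S\le\lambda_1/\eps=0$. Hence $S\equiv0$, and because $\rho\ge0$, also $\rho\equiv0$: $M$ is Ricci flat.

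The remaining step, and the main obstacle, is upgrading Ricci flatness to $\Rm\equiv0$ under nonnegative quadratic orthogonal bisectional curvature. The plan is to use the pointwise algebraic identity
\[
  \sum_{i,j}R_{i\bar ij\bar j}(a_i-a_j)^2
  =2\sum_i a_i^2\,R_{i\bar i}-2\sum_{i,j}R_{i\bar ij\bar j}a_ia_j .
\]
When $\Ric\equiv0$ the first sum on the right vanishes. Condition \eqref{eq:qobc} then says that the real symmetric matrix $(R_{i\bar ij\bar j})$ is negative semidefinite for every unitary frame. Its trace is $\sum_iR_{i\bar ii\bar i}$, the sum of holomorphic sectional curvatures over the frame.

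Averaging this trace over the unitary group gives a positive multiple of the scalar curvature $S=0$. So the average of nonpositive quantities is zero, forcing $\sum_iR_{i\bar ii\bar i}=0$ in every frame. A negative semidefinite matrix with zero trace vanishes, so $R_{i\bar ij\bar j}=0$ for all unitary frames. Polarization then recovers the full Kähler curvature tensor from the bisectional curvatures $R(X,\bar X,Y,\bar Y)$. One checks that vanishing on orthonormal pairs extends to arbitrary pairs by expanding $Y$ in a frame adapted to $X$, which kills all holomorphic sectional curvatures and hence $\Rm$.

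If this algebra proves delicate, the fallback is a global argument. One would note that a Ricci-flat metric satisfying \eqref{eq:qobc} is known to be flat, via the Bochner argument of Tam and the second author. Alternatively, with $|\Rm|$ bounded in $L^n$, one could use Euclidean-type decay.
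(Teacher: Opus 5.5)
Your proposal is correct and follows the paper's proof essentially step for step: the pinched determinant bound gives $S\in L^n$, Theorem~\ref{thm:critical} gives $\rho^n\equiv0$ and hence $\rho\equiv0$, and then \eqref{eq:qobc} with $\Ric\equiv0$ forces every holomorphic sectional curvature to be nonpositive, while its average over the sphere (or unitary group) is a multiple of $S=0$, so $H\equiv0$ and $\Rm\equiv0$ by polarization. Your negative-semidefinite-matrix formulation and your constant $\eps^{n-1}/n$ in the determinant bound are cosmetic variants of the paper's choice $a=(1,0,\ldots,0)$ and its bound $\eps^n S^n$, and the fallback paragraph is not needed.
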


\begin{proof}[Proof of Theorem~\ref{thm:finite}]
Let $\lambda_1,\ldots,\lambda_n\ge0$ be the eigenvalues of $\rho$
relative to $\omega$. The pinching condition gives
$\lambda_j\ge\eps S$ for every $j$. Therefore
\begin{equation}\label{eq:pinched-determinant}
  \eps^n S^n\,dV
  \le\frac{\rho^n}{n!}
  =\Bigl(\prod_{j=1}^n\lambda_j\Bigr)dV
  \le n^{-n}S^n\,dV.
\end{equation}
The finite-mass hypothesis implies $S\in L^n(M)$. Since $\rho\ge0$,
we also have $|\rho|\le C_nS$, so $|\rho|\in L^n(M)$. The Ricci form
is smooth, real, and closed. Theorem~\ref{thm:critical} applied to
$\alpha=\rho$ therefore gives $\rho^n\equiv0$. The first inequality
in \eqref{eq:pinched-determinant} then implies $S\equiv0$, and
nonnegative Ricci curvature gives $\rho\equiv0$.

To conclude full flatness under \eqref{eq:qobc}, fix a point $x\in M$
and a unit $(1,0)$-vector $X\in T_x^{1,0}M$. Extend $e_1=X$ to a
unitary frame and take $(a_1,a_2,\ldots,a_n)=(1,0,\ldots,0)$ in
\eqref{eq:qobc}. Writing
$H(X)=R(X,\overline X,X,\overline X)$, we obtain
\begin{equation}\label{eq:holomorphic-nonpositive}
  0\le2\sum_{j=2}^nR_{1\bar1 j\bar j}
  =2\bigl(\Ric(X,\overline X)-H(X)\bigr)
  =-2H(X).
\end{equation}
Thus $H(X)\le0$ for every unit $X$.

Let $d\sigma$ be the unitary-invariant probability measure on the
unit sphere in $T_x^{1,0}M$. The elementary moment identity
\[
  \int_{|z|=1}z_i\bar z_j z_k\bar z_l\,d\sigma(z)
  =\frac{\delta_{ij}\delta_{kl}+\delta_{il}\delta_{kj}}{n(n+1)}
\]
and the K\"ahler curvature symmetries give
\begin{equation}\label{eq:holomorphic-average}
  \int_{|X|=1}H(X)\,d\sigma(X)
  =\frac{2S(x)}{n(n+1)}=0.
\end{equation}
A continuous nonpositive function with zero average is identically
zero. Hence all holomorphic sectional curvatures vanish at $x$.
Polarization of the K\"ahler curvature tensor gives $\Rm(x)=0$.
Since $x$ was arbitrary, $\Rm_g\equiv0$.
\end{proof}

\section{Busemann exhaustion and compactness}\label{sec:compactness}

We first verify the exhaustion hypothesis in the general clause of
\cite[Theorem~1.1]{DPS26}. This is the only step for which a condition on
real sectional curvature is needed.

\begin{lemma}[Busemann heat smoothing]\label{lem:busemann}
Let $(M^n,g,J)$ be a connected complete noncompact K\"ahler manifold with
strictly positive holomorphic bisectional curvature. Assume that the
real sectional curvature is nonnegative outside a compact set.
Then $M$ admits a smooth strictly plurisubharmonic exhaustion $\psi$
such that
\begin{equation}\label{eq:busemann-gradient}
  \sup_M|\nabla\psi|\le1.
\end{equation}
\end{lemma}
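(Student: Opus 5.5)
The plan is to follow the Busemann heat deformation of \cite[Lemma~4.1 and the proof of Theorem~4.2]{NT03}. Fix $p\in M$. Since $M$ is complete and noncompact, there are rays from $p$, and we take the Busemann function
\[
  b(x)=\sup_{\gamma}\lim_{t\to\infty}\bigl(t-d(x,\gamma(t))\bigr),
\]
the supremum running over all rays $\gamma$ from $p$. Then $b$ is $1$-Lipschitz.

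The first step is to show that $b$ is an exhaustion with at least linear growth. Outside the compact set $C$ we have nonnegative sectional curvature, so the Cheeger--Gromoll argument shows that the sublevel sets of $b$ are bounded. This argument concerns rays far out, so it localizes away from $C$, possibly after adding a constant. More precisely, I would aim for $b(x)\ge c\,r(x)-C'$.

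The second step is the plurisubharmonicity input. Nonnegative sectional curvature outside $C$ implies that $b$ is convex there in the barrier sense. In particular $\ddc b\ge0$ weakly outside $C$, and every complex Hessian of $b$ along $J$-invariant directions is nonnegative. Inside $C$ we only know that $b$ is Lipschitz, so its Hessian is bounded below by $-K$ in the barrier sense.

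The third step is to smooth. Put $\psi_0=b$ and run the scalar heat equation, $v(x,t)=P_tb(x)$. This is well defined because $b$ has linear growth and $\Ric>0$ (positive bisectional curvature gives positive Ricci). Gradient contraction under $\Ric\ge0$, the same Bochner argument as in Lemma~\ref{lem:scalar}, gives
\[
  |\nabla v(\cdot,t)|\le P_t|\nabla b|\le1,
\]
which is \eqref{eq:busemann-gradient}. For plurisubharmonicity, use the fact from \cite{NT03} that under nonnegative bisectional curvature the complex Hessian $v_{i\bar j}$ satisfies a Lichnerowicz-type heat equation whose reaction term is controlled by bisectional curvature. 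By the maximum principle on complete manifolds (Ni--Tam), $\ddc v\ge0$ is preserved when it holds initially. Strict positivity of bisectional curvature then forces $\ddc v(t)>0$ for $t>0$: if a null vector existed at an interior point, the strong maximum principle would give a parallel null distribution for $\ddc v$, and the reaction term $R_{i\bar j k\bar l}v_{k\bar l}$ combined with positive bisectional curvature yields a contradiction unless $\ddc v\equiv0$. That in turn contradicts the growth of $b$. Exhaustion is preserved because $|v(x,t)-b(x)|\le C\sqrt t$, by the gradient bound and Gaussian heat kernel estimates.

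The main obstacle is the compact set $C$, where $b$ need not be plurisubharmonic. I would first modify $b$ near $C$: replace it by $\max\{b,\,A\}$ for a large constant $A$ exceeding $\sup_{C'}b$ on a neighborhood $C'$ of $C$. Then $\ddc$ of the modification is $\ge0$ weakly everywhere, since the maximum of plurisubharmonic functions is plurisubharmonic and the constant is harmlessly plurisubharmonic. The gradient bound $\le1$ survives, and exhaustion is unaffected. The subtle point is that the weak inequality $\ddc b\ge0$ from barrier convexity must be converted into a distributional statement strong enough to start the tensor maximum principle, which requires quadratic-growth control of the initial Hessian measure. If the direct argument is delicate, the fallback is the route of \cite[Theorem~4.2]{NT03}: smooth first by Greene--Wu convolution to obtain a smooth function with $\ddc\ge-\epsilon\omega$ and gradient at most $1+\epsilon$, then apply the heat flow and positive bisectional curvature to absorb the $\epsilon$ error. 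Finally rescale by $(1+\epsilon)^{-1}$.
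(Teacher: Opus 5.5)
Your outline follows the paper's route. You use the full Busemann function, properness from the argument in \cite[proof of Theorem~4.2]{NT03}, and the heat deformation of \cite[Lemma~4.1]{NT03}. That lemma supplies smoothness and preservation of plurisubharmonicity, strictness from the parallel-kernel and strong-maximum-principle argument under positive bisectional curvature, the bound $|\nabla v|\le1$, and properness via $|P_tb-b|\le C\sqrt t$. You diverge only in Step~2, where you misidentify the source of plurisubharmonicity: it needs no convexity. Let $\sigma$ be a minimal geodesic of length $L$ from $x$, and let $E$ be the parallel transport of a unit $X\in T_xM$. Inserting $(1-s/L)E$ and $(1-s/L)JE$ into the index form bounds $\nabla^2 d(X,X)+\nabla^2 d(JX,JX)$ by $2/L$ minus a weighted integral of the sum of the curvatures of the planes spanned by $\{\sigma',E\}$ and $\{\sigma',JE\}$. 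By the Bianchi identity this sum is the holomorphic bisectional curvature of $\sigma'$ and $E$. Hence bisectional curvature alone makes every $b_\gamma$, and so $b$, plurisubharmonic on all of $M$, including on $C$; this is the fact from \cite[Section~4]{NT03} that the paper quotes. The compact set is therefore no obstacle: the $\max\{b,A\}$ modification and the Greene--Wu fallback are unnecessary, and sectional curvature enters only through properness.

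Your workaround still works, but the convexity claim needs a correction. Barrier convexity of $b_\gamma$ at $x\notin C$ uses the ray from $x$ asymptotic to $\gamma$, and that ray may cross $C$. Since $b_\gamma$ grows at unit speed along it, this forces $b_\gamma(x)\le\sup_C b-d(x,C)$. Because $b$ is bounded below, such $b_\gamma$ lie far below $b$ outside some larger compact set $C_1$. So you get convexity only outside $C_1$, and $C'$ must be a neighborhood of $C_1$, not of $C$. Likewise, the Cheeger--Gromoll properness argument does not simply localize away from $C$, since its Toponogov triangles have a vertex at the base point and can cross $C$. That is what the cited \cite{NT03} argument supplies. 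With these adjustments your version is also correct, but the paper's is shorter.
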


\begin{proof}
Fix $o\in M$. We use the full Busemann function
\begin{equation}\label{eq:full-busemann}
  B(x)=\sup_{\gamma}B_\gamma(x),
  \qquad
  B_\gamma(x)=\lim_{s\to\infty}\bigl(s-d(x,\gamma(s))\bigr),
\end{equation}
where $\gamma$ ranges over all unit-speed minimizing rays from $o$.
In particular, $B$ is not the Busemann function of an arbitrary single
ray. It is a continuous plurisubharmonic function with Lipschitz
constant one; see \cite[Section~4]{NT03}. The exhaustion argument
invoked in \cite[proof of Theorem~4.2]{NT03} shows that $B$ is proper
when the real sectional curvature is nonnegative outside a compact
set.

Let $v(\cdot,t)=P_tB$. By \cite[Lemma~4.1(i)--(iii)]{NT03}, at each
positive time $v(\cdot,t)$ is smooth and strictly plurisubharmonic,
its gradient has norm at most one, and it is an exhaustion because
$B$ is an exhaustion. The harmless constant rescaling of heat time in
our Laplacian convention does not affect these statements. Fixing
one positive time $t_0$ and taking $\psi=v(\cdot,t_0)$ proves the
lemma.
\end{proof}

The strictness in this lemma does not require a simple-connectivity
assumption. In the parallel-kernel description of the heat
deformation by Tam and the second author, a proper nonzero complex-Hessian kernel would give a local
K\"ahler product, whose mixed bisectional curvatures vanish. Strictly
positive bisectional curvature excludes this. A wholly null Hessian
would make the exhaustion harmonic; it would attain a minimum and
hence be constant, which is impossible on a noncompact manifold.

\begin{proposition}[Finite top Ricci mass]\label{prop:top-mass-busemann}
Under the hypotheses of Lemma~\ref{lem:busemann}, the Ricci form
satisfies
\begin{equation}\label{eq:DPS}
  \int_M\rho^n<\infty.
\end{equation}
No Ricci pinching assumption is required for this conclusion.
\end{proposition}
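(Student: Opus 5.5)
The plan is to reduce Proposition~\ref{prop:top-mass-busemann} to the general clause of \cite[Theorem~1.1]{DPS26}. That theorem asserts finiteness of $\int_M\rho^n$ for a complete K\"ahler manifold with strictly positive holomorphic bisectional curvature that carries a smooth strictly plurisubharmonic exhaustion with bounded gradient. Lemma~\ref{lem:busemann} supplies such an exhaustion $\psi$ with $\sup_M|\nabla\psi|\le1$, under exactly the hypotheses stated here. So the proof is essentially a verification that the hypotheses of \cite[Theorem~1.1]{DPS26} match.

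The first step is to check the setting. The hypotheses of Lemma~\ref{lem:busemann} include noncompactness; in the compact case the statement is trivial anyway, since $\rho$ is smooth on a compact manifold. Positive bisectional curvature gives $\rho>0$ pointwise, so $\rho^n$ is a positive smooth measure. The quantity $\int_M\rho^n$ is therefore a well-defined element of $[0,\infty]$, and only finiteness is at issue.

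The second step is to match the exhaustion hypothesis. I would take $\psi=v(\cdot,t_0)$ from Lemma~\ref{lem:busemann}. It is smooth, strictly plurisubharmonic and proper, and its gradient bound is uniform on all of $M$. I also need to confirm that \cite{DPS26} requires no further quantitative control on $\psi$, such as a lower bound on $\ddc\psi$ or a growth rate. If some growth condition were needed, it is available here: $\psi$ is $1$-Lipschitz, and since $|P_tB-B|\le C\sqrt t$ by the Lipschitz bound on $B$, $\psi$ is comparable to the Busemann function.

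For intuition, and as a fallback, the expected mechanism in \cite{DPS26} is as follows. One integrates $\rho^n$ over the sublevel sets $\{\psi<s\}$ and writes $\rho^{n}=\rho\wedge\rho^{n-1}$ with $\rho=-\ddc\log\det g$ locally. One then trades one factor of $\rho$ against $\ddc\psi$ through integration by parts on the level sets, with the boundary terms controlled by $|\nabla\psi|\le1$. Positive bisectional curvature enters through a comparison of the form $\rho\wedge\rho^{n-1}\lesssim$ terms involving $\ddc\psi$.

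The main obstacle is not in our argument but in ensuring that the hypotheses of the cited theorem are exactly the ones we produce. In particular, the strictness of plurisubharmonicity must hold everywhere. This is addressed in the remark following Lemma~\ref{lem:busemann}, via the parallel-kernel splitting excluded by positive bisectional curvature. Properness must also hold, and this is where nonnegative sectional curvature outside a compact set is used. No Ricci pinching enters at any point, consistent with the statement.
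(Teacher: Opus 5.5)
Your proposal is correct and follows the paper's proof: the paper also observes that the smooth strictly plurisubharmonic exhaustion with $\sup_M|\nabla\psi|\le1$ from Lemma~\ref{lem:busemann}, together with strictly positive holomorphic bisectional curvature, gives exactly the hypotheses of the general clause of Datar--Pingali--Seshadri's Theorem~1.1, which then yields $\int_M\rho^n<\infty$. Your extra remarks are not needed for the argument; these are the comparability of $\psi$ with $B$ via $|P_tB-B|\le C\sqrt t$ and the heuristic integration-by-parts mechanism.
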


\begin{proof}
Lemma~\ref{lem:busemann} gives a smooth strictly plurisubharmonic
exhaustion with uniformly bounded gradient. Together with positive
holomorphic bisectional curvature, these are exactly the hypotheses in
the general clause of Datar--Pingali--Seshadri
\cite[Theorem~1.1]{DPS26}. That result gives \eqref{eq:DPS}.
\end{proof}

\begin{proof}[Proof of Theorem~\ref{thm:main}]
Suppose, to obtain a contradiction, that $M$ is noncompact. Positive
holomorphic bisectional curvature implies $\rho>0$ everywhere.
The continuous function
\[
  x\longmapsto
  \frac{\lambda_{\min}(\Ric_g)(x)}
       {\lambda_{\max}(\Ric_g)(x)}
\]
has a positive minimum on the compact set in the theorem. Thus, after
decreasing the exterior pinching constant, we obtain
\eqref{eq:ratio} on all of $M$, with a constant $0<\kappa\le1$.

Write $0<\lambda_1\le\cdots\le\lambda_n$ for the Hermitian Ricci
eigenvalues. By \eqref{eq:ratio},
\[
  S=\sum_{j=1}^n\lambda_j
  \le\lambda_1+(n-1)\lambda_n
  \le\left(1+\frac{n-1}{\kappa}\right)\lambda_1.
\]
Therefore
\begin{equation}\label{eq:epsilon-kappa}
  \rho\ge\eps S\omega,
  \qquad
  \eps=\frac{\kappa}{n-1+\kappa}>0.
\end{equation}
Proposition~\ref{prop:top-mass-busemann} gives \eqref{eq:DPS}.
Nonnegative Ricci curvature and nonnegative quadratic orthogonal
bisectional curvature also follow from positive holomorphic
bisectional curvature. All the hypotheses of
Theorem~\ref{thm:finite} are now satisfied, so $\Rm_g\equiv0$.
This contradicts strictly positive holomorphic bisectional curvature.
Hence $M$ is compact.
\end{proof}

\subsection*{Remarks on the hypotheses and the argument}

\begin{remark}[Role of sectional curvature and the Busemann hypothesis]
Real sectional curvature is used only to obtain exhaustiveness in
Lemma~\ref{lem:busemann}. Consequently, one may instead assume that
the full Busemann function is exhaustive, while retaining positive
holomorphic bisectional curvature everywhere and uniform Ricci
pinching outside a compact set. More generally, a smooth strictly
plurisubharmonic exhaustion with bounded gradient is sufficient.
Positive bisectional curvature alone is not being asserted to supply
this exhaustion. In the special case $K_g\ge0$ everywhere, the full
Busemann function is convex as well as exhaustive; global convexity
is not required by the proof above.
\end{remark}

\begin{remark}
The proof uses the Hodge-Laplace heat equation deformation of a $(1,1)$-form, not a global solution
of $\ddc\phi=\rho$. In particular, it does not require the stronger
average-integrability hypothesis for the Poincar\'e--Lelong theorem in
\cite[Theorem~1.2]{NT13}. It also does not require a quantitative rate
for $\norm{P_t|\rho|}_{L^n}\to0$. Norm domination of the form evolution
and conservation of the signed top-degree integral, including its initial
value, allow qualitative $L^n$ decay to suffice. Semipositivity is used
only for the initial Ricci form, not for the evolved forms. The Busemann
heat deformation serves solely to verify the geometric finiteness input;
it is separate from the Hodge heat evolution of the Ricci form.

\end{remark}

\begin{remark}
The analytic Theorem~\ref{thm:finite} requires only nonnegative Ricci
curvature and nonnegative quadratic orthogonal bisectional curvature,
together with Ricci pinching and finite-mass assumptions. This weaker
analytic curvature hypothesis does not by itself supply a finiteness
theorem for the top Ricci integral. In
Proposition~\ref{prop:top-mass-busemann}, positive holomorphic
bisectional curvature is needed. 
\end{remark}

\begin{remark}[Complex dimension one]
The restriction $n\ge2$ cannot be omitted. The cigar metric
\[
  g_{\mathrm{cig}}
  =\frac{dx^2+dy^2}{1+x^2+y^2}
\]
on $\C$ is complete and noncompact, and its Gaussian curvature is
\[
  K=\frac{2}{1+x^2+y^2}>0.
\]
Its Ricci eigenvalue ratio is identically one, and direct integration gives
$\int_{\C}K\,dA=2\pi$. In the proof of
Theorem~\ref{thm:critical}, $n=1$ would require $L^1$ heat-semigroup decay,
which fails in general: a nonnegative integrable initial function retains
its $L^1$ mass on a stochastically complete manifold.
\end{remark}

\section{Examples}\label{sec:examples}

We prove Proposition~\ref{prop:examples} by using the
$\mathrm U(n)$-invariant metrics of Wu--Zheng~\cite{WZ11}. We recall the
formulas needed for the calculation; see also \cite[Section~5]{CLT16}.
In this section $t=|z|^2$ is a radial coordinate, not heat time.

Given a smooth function $\xi$ on $[0,\infty)$ with $\xi(0)=0$, define
\begin{equation}\label{eq:radial-functions}
  h(t)=\exp\left(-\int_0^t\frac{\xi(u)}u\,du\right),
  \qquad
  H(t)=\int_0^t h(u)\,du,
  \qquad
  f(t)=\frac{H(t)}t.
\end{equation}
The expression for $f$ is extended smoothly to $t=0$. The Hermitian
matrix
\begin{equation}\label{eq:radial-metric}
  g_{i\bar j}=f(t)\delta_{ij}+f'(t)\bar z_i z_j
\end{equation}
defines a K\"ahler metric, since $f>0$ and $(tf)'=h>0$. We use
$\omega=\sqrt{-1}\sum g_{i\bar j}\,dz_i\wedge d\bar z_j$.
With this normalization, the distance from the origin is
\begin{equation}\label{eq:radial-distance}
  r(t)=\frac1{\sqrt2}\int_0^t\sqrt{\frac{h(u)}u}\,du.
\end{equation}

At a point $(\sqrt t,0,\ldots,0)$, take the unitary frame whose first
vector is radial. The curvature tensor is determined by
\begin{equation}\label{eq:radial-curvature}
 \begin{aligned}
  A(t)&=\frac{\xi'(t)}{h(t)},\\
  B(t)&=\frac1{H(t)^2}\int_0^t\xi'(u)H(u)\,du,\\
  D(t)&=\frac2{H(t)^2}\int_0^t\xi(u)h(u)\,du.
 \end{aligned}
\end{equation}
Here $A=R_{1\bar1 1\bar1}$, $B=R_{1\bar1 i\bar i}$ for $i\ge2$, and
$D=R_{i\bar i i\bar i}=2R_{i\bar i j\bar j}$ for distinct $i,j\ge2$.
The last identity is only needed when $n\ge3$. The Ricci eigenvalues are
\begin{equation}\label{eq:radial-ricci}
  \lambda_{\mathrm{rad}}=A+(n-1)B,
  \qquad
  \lambda_{\mathrm{tan}}=B+\frac n2D,
\end{equation}
where the second has multiplicity $n-1$.

\begin{proof}[Proof of Proposition~\ref{prop:examples}]
Fix $0<a<1$ and $\delta>0$. We choose
\begin{equation}\label{eq:profile}
  \xi(t)=a\left(1-\bigl(\log(e+t)\bigr)^{-\delta}\right).
\end{equation}
Then $\xi(0)=0$, $\xi'(t)>0$, and $\xi(t)<a<1$. The Wu--Zheng
criterion implies that \eqref{eq:radial-metric} has positive bisectional
curvature. Smoothness at the origin follows from
\[
  \xi(t)=\frac{a\delta}{e}t+O(t^2).
\]
For $t\ge1$, the inequality $\xi\le a$ gives
$h(t)\ge h(1)t^{-a}$. Thus the integral in
\eqref{eq:radial-distance} diverges, and the metric is complete.

Put $b=1-a>0$. Since
\[
  \frac{th'(t)}{h(t)}=-\xi(t)\longrightarrow-a,
\]
l'H\^opital's rule gives
\begin{equation}\label{eq:radial-asymptotics}
  H(t)\sim\frac{th(t)}b,
  \qquad
  r(t)\sim\frac{\sqrt{2th(t)}}b.
\end{equation}
Here and below $u\sim v$ means $u/v\to1$. Indeed,
$(th)'=(1-\xi)h$, and differentiating $\sqrt{th}$ gives the second
limit. Also,
\begin{equation}\label{eq:log-asymptotics}
  \log h(t)=-a\log t+o(\log t),
  \qquad
  \log r(t)\sim\frac b2\log t.
\end{equation}
The determinant of \eqref{eq:radial-metric} is $f^{n-1}h$.
Integration in polar coordinates yields
\[
  \Vol(B(0,r(t)))=2^n\omega_{2n}H(t)^n.
\]
Combining this with \eqref{eq:radial-asymptotics}, we obtain
\begin{equation}\label{eq:example-avr}
  \lim_{r\to\infty}
  \frac{\Vol(B(0,r))}{\omega_{2n}r^{2n}}=b^n>0.
\end{equation}

We next compute the curvature. By \eqref{eq:profile},
\[
  \xi'(t)=\frac{a\delta}{(e+t)(\log(e+t))^{1+\delta}},
\]
and hence
\begin{equation}\label{eq:A-asymptotic}
  A(t)\sim\frac{a\delta}{th(t)(\log t)^{1+\delta}}.
\end{equation}
To estimate $B$, set $J(t)=\int_0^t\xi'(u)H(u)\,du$.
By \eqref{eq:radial-asymptotics},
\[
  J'(t)\sim\frac{a\delta}{b}
                     \frac{h(t)}{(\log t)^{1+\delta}}.
\]
Since
\[
  \frac{d}{dt}\left(\frac{th(t)}{(\log t)^{1+\delta}}\right)
  \sim\frac{bh(t)}{(\log t)^{1+\delta}},
\]
another application of l'H\^opital's rule gives
\[
  J(t)\sim\frac{a\delta}{b^2}
                     \frac{th(t)}{(\log t)^{1+\delta}}.
\]
It follows that
\begin{equation}\label{eq:BD-asymptotic}
  B(t)\sim\frac{a\delta}{th(t)(\log t)^{1+\delta}},
  \qquad
  D(t)\sim\frac{2ab}{th(t)}.
\end{equation}
For the second relation, we used $\int_0^t\xi h\sim aH(t)$,
which follows from $\xi\to a$ and $H(t)\to\infty$.

Equations~\eqref{eq:radial-ricci}, \eqref{eq:A-asymptotic},
and~\eqref{eq:BD-asymptotic} imply
\[
  \lambda_{\mathrm{rad}}(t)
      \sim\frac{na\delta}{th(t)(\log t)^{1+\delta}},
  \qquad
  \lambda_{\mathrm{tan}}(t)\sim\frac{nab}{th(t)}.
\]
In particular, $\lambda_{\mathrm{rad}}<\lambda_{\mathrm{tan}}$ for
sufficiently large $t$, and
\[
  \frac{\Ricmin}{\Ricmax}
  \sim\frac\delta b(\log t)^{-1-\delta}
  \asymp(\log r)^{-1-\delta}.
\]
Finally, \eqref{eq:radial-curvature}, \eqref{eq:radial-asymptotics},
\eqref{eq:A-asymptotic}, and~\eqref{eq:BD-asymptotic} show that
$|\Rm|=O(r^{-2})$. On a compact neighborhood of the origin the
curvature is bounded by smoothness. This proves all the assertions.
\end{proof}

\begin{remark}
The examples have one small Ricci eigenvalue and $n-1$ eigenvalues of
order $r^{-2}$. If $\kappa=\Ricmin/\Ricmax$, then
$\det_\omega\rho\asymp\kappa S^n$ in these examples. The general estimate
\eqref{eq:ratio-determinant} instead involves $\kappa^{n-1}$.
These powers agree when $n=2$, which explains why the examples establish
sharpness in that dimension.
\end{remark}

\subsection*{Acknowledgments}
The authors thank  J. Lott and J. Wang for their interests and 
discussions.

\end{document}